\documentclass[
]{mpi2015-cscpreprint}

\usepackage[utf8]{inputenc}
\usepackage{tikz}
\usepackage{amsmath}
\usepackage{graphicx}
\usepackage{amsthm}
\usepackage{amssymb}
\usepackage{amsfonts}
\usepackage{bbm}

\usepackage{algorithm}
\usepackage{algpseudocode}
\usepackage{array,multirow}
\usepackage{siunitx}
\usepackage{csquotes}

\usepackage{biblatex}
\usepackage{hyperref}
\usepackage{xurl}
\hypersetup{breaklinks=true}
\usepackage{color}
\addbibresource{Biblio.bib}

\usepackage{url}
\usepackage{doi}

\newtheorem{definition}{Definition}[section]
\newtheorem{theorem}{Theorem}[section]
\newtheorem{proposition}{Proposition}[section]
\newtheorem{lemma}[theorem]{Lemma}

\newcommand{\junk}[1] {}

\usepackage{todonotes}


\newcommand{\norm}[1]{\left\lVert#1\right\rVert}
\def\IR{{\mathbb R}}
\def\IC{{\mathbb C}}

\def\IL{{\mathbb L}}

\newcommand{\bA}{{\textbf A}}
\newcommand{\bB}{{\textbf B}}
\newcommand{\bC}{{\textbf C}}
\newcommand{\bD}{{\textbf D}}
\newcommand{\bE}{{\textbf E}}

\newcommand{\bS}{{\textbf S}}
\newcommand{\bY}{{\textbf Y}}
\newcommand{\bJ}{{\textbf J}}
\newcommand{\bK}{{\textbf K}}

\newcommand{\bL}{{\textbf L}}

\newcommand{\bI}{{\textbf I}}

\newcommand{\bP}{{\textbf P}}

\newcommand{\bQ}{{\textbf Q}}
\newcommand{\bW}{{\textbf W}}
\newcommand{\bR}{{\textbf R}}
\newcommand{\bT}{{\textbf T}}

\newcommand{\bX}{{\textbf X}}
\newcommand{\bx}{{\textbf x}}

\newcommand{\bV}{{\textbf V}}
\newcommand{\bU}{{\textbf U}}

\newcommand{\bfe}{{\textbf e}}

\newcommand{\bq}{{\textbf q}}

\newcommand{\bw}{{\textbf w}}

\newcommand{\cD}{ {\cal D} }

\newcommand{\bSigma}{\boldsymbol{\Sigma}}
\newcommand{\bDelta}{\boldsymbol{\Delta}}
\newcommand{\bLambda}{\boldsymbol{\Lambda}}

\newcommand{\Aden}{{\mathcal{A}}}
\newcommand{\Bden}{{\mathcal{B}}}
\newcommand{\Cden}{{\mathcal{C}}}

\def\IR{{\mathbb R}}
\def\IC{{\mathbb C}}
\def\IL{{\mathbb L}}
\def\IV{{\mathbb V}}
\def\IW{{\mathbb W}}
\newcommand{\sIL}{{{{\mathbb L}_s}}}

\newcommand{\Sig}{\boldsymbol{\Sigma}}

\newcommand{\trans}{\ensuremath{^{\mkern-1.5mu\mathsf{T}}}}

\begin{document}

\title{A modified AAA algorithm for learning stable reduced-order models from data}

\author[$\ast$]{Tommaso Bradde}
\affil[$\ast$]{Department of Electronics and Telecommunications,
	Politecnico di Torino, 10129, Italy.\authorcr
	\email{tommaso.bradde@polito.it}, \orcid{0000-0002-9464-2535}}

\author[$\dag$]{Stefano Grivet-Talocia}
\affil[$\dag$]{Department of Electronics and Telecommunications,
	Politecnico di Torino, 10129, Italy.\authorcr
	\email{stefano.grivet@polito.it}, \orcid{0000-0002-5463-3810}}

\author[$\ddagger$]{Quirin Aumann}
\affil[$\ddagger$]{Max Planck Institute for Dynamics of Complex Technical Systems,
	Sandtorstr. 1, 39106 Magdeburg, Germany.\authorcr
	\email{aumann@mpi-magdeburg.mpg.de}, \orcid{0000-0001-7942-5703}}

\author[$\S$]{Ion Victor Gosea}
\affil[$\S$]{Max Planck Institute for Dynamics of Complex Technical Systems,
	Sandtorstr. 1, 39106 Magdeburg, Germany.\authorcr
	\email{gosea@mpi-magdeburg.mpg.de}, \orcid{0000-0003-3580-4116}}

\abstract{In recent years, the Adaptive Antoulas-Anderson (\texttt{AAA}) algorithm has established itself as the method of choice for solving rational approximation problems. Data-driven Model Order Reduction (MOR) of large-scale Linear Time-Invariant (LTI) systems represents one of the many applications in which this algorithm has proven to be successful since it typically generates reduced-order models (ROMs) efficiently and in an automated way. Despite its effectiveness and numerical reliability, the classical \texttt{AAA} algorithm is not guaranteed to return a ROM that retains the same structural features of the underlying dynamical system, such as the stability of the dynamics. In this paper, we propose a novel algebraic characterization for the stability of ROMs with transfer function obeying the \texttt{AAA} barycentric structure. We use this characterization to formulate a set of convex constraints on the free coefficients of the \texttt{AAA} model that, whenever verified, guarantee by construction the asymptotic stability of the resulting ROM. We suggest how to embed such constraints within the \texttt{AAA} optimization routine, and we validate experimentally the effectiveness of the resulting algorithm, named \texttt{stabAAA}, over a set of relevant MOR applications.
}

\novelty{This paper introduces a modified \texttt{AAA} algorithm that can be used to generate asymptotically stable reduced-order models for large-scale linear time-invariant systems. The approach is based on convex programming and generates the model in a completely automated manner.}

\maketitle
\section{Introduction}
\label{sec:intro}

Approximation of large-scale dynamical systems \cite{ACA05} is dire for achieving various goals such as employing efficient simulation or devising robust, automatic control laws. MOR is viewed as a collection of techniques for reducing the computational complexity of mathematical models in numerical simulations. MOR is closely related to the concept of meta-modeling, having various applications in many areas of mathematical modelling.\footnote{\url{https://en.wikipedia.org/wiki/Model_order_reduction}}. Each MOR approach is generally tailored to specific applications and for achieving different goals. Out of the many various ways to reduce large-scale models, two particular classes stand out. They have to do with the explicit accessibility to the high-fidelity model and hence can be categorized into intrusive and non-intrusive methods. In the intrusive case (in which an explicit model, i.e., of ordinary differential equations is available), methods such as balanced truncation (BT) (see \cite{ACA05} and the recent survey \cite{breiten20212}) and moment matching (MM) (\cite{ACA05} and the recent survey \cite{benner2021model}) are available. BT approximates the original system by computing balancing coordinates, and by truncating the components with less energy (offering an a priori error bound, and with a guarantee of stability). MM chooses significant dynamics of the transfer function and interpolates between these features.

On the other hand, the ever-increasing availability of data, i.e., measurements related to the original model, initiate non-intrusive techniques such as Machine Learning (ML) combined with model-based methods \cite{brunton2022data}. ML has demonstrated fairly extensive success in specific tasks, e.g., pattern recognition or feature extraction. The limitations of ML methods might arise when the interpretation of the derived models is under consideration. Therefore, model-based data assimilation through MOR techniques such as dynamic mode decomposition (DMD)~\cite{schmid_2010,ProBruKut2016}, operator inference (OpInf) \cite{morPehW16,morBenGKetal20}, or sparse identification of dynamical systems (SINDY) \cite{SINDY} have become popular. The reason is that, in some cases, an accurate description of the original large-scale dynamical system may not be fully available. Instead, one may have access to different types of measurements  (snapshots in the time domain or in the frequency domain, such as frequency response data). The aforementioned methods use state-access snapshot measurements to achieve model discovery that could be input-dependent. Model identification can be also achieved without using full-state access measurements and by constructing input-invariant models. In this category, system identification methods \cite{Lj99} such as subspace identification approaches \cite{van1994n4sid,van2012subspace} represent non-intrusive methods that deal directly with input-output data (in the time domain) and can identify linear and nonlinear systems, by also offering the opportunity for complexity reduction. Another way to identify a ROM from input-output data is by employing rational approximation when using frequency-domain data (samples of the transfer function, e.g., sampled on the imaginary axis). We refer the reader to the recent book \cite{ABG20} for more details. Other methods also emerged in the last few years; these are mostly based on applying optimization tools or on an adaptive selection of interpolation points, such as the contributions \cite{pradovera2022technique,cherifi2022greedy,goyal2023rank}.

Besides classical rational approximation methods such as the Loewner Framework (\texttt{LF})~\cite{MA07}, and the Vector Fitting (\texttt{VF}) algorithm \cite{VF,BraddeRTVF}, in recent years there has been rapid development of new methods, such as \texttt{RKFIT} \cite{berljafa2017rkfit} and the \texttt{AAA} algorithm \cite{NST18}, to mention only some. The latter was intensively and successfully applied to many challenging problems and test cases, having the advantage of performing the rational approximation task in an automated and numerically reliable way. By performing a greedy iteration guided by a maximum error criterion, this algorithm repeatedly increases the order of the barycentric model structure proposed in~\cite{AA86}, until a desired approximation error on the data values is enforced (based on a pre-defined tolerance value).

Most of the rational approximation algorithms (such as \texttt{VF} and \texttt{RKFIT}) require the user to provide the number of poles to be used in the approximation. Then, the precise values of the poles are found through various optimization techniques, and the best rational approximation, on the data set, is returned. For such approaches in their basic implementation, no particular guarantees for final accuracy in the approximation are enforced. The algorithm needs to be run again with a different starting point (the original guess of the poles) if insufficient accuracy is achieved. More advanced formulations exist for the \texttt{VF} scheme~\cite{grivet2015passive}, that adopt a greedy procedure to automatically determine the model order based on a target accuracy, including dedicated procedures to handle noisy data~\cite{jnl-2006-temc-VFAS,instrumental}. These methods are now part of virtually every Electronic Design Automation (EDA) commercial tool, which are almost invariably based on some \texttt{VF} variant due to its inherent robustness~\cite{mwcl-2008-Deschrijver-FastVF,jnl-2011-tcpmt-pvf,6470726}.

Automatic order estimation through the \texttt{AAA} algorithm is more elegant and straightforward so that this algorithm has the potential to become the method of choice for accuracy-controlled rational interpolation/approximation. Unfortunately,
the \texttt{AAA} algorithm also has some shortcomings. For example, if the data (transfer function measurements) comes from sampling the response of a dynamical system at certain driving frequencies, then important properties such as stability or passivity (corresponding to the original, large-scale model) may not be conserved for the fitted \texttt{AAA} (low-order) model. This behavior has also been reported for the LF, and it is, in general, a problem for most interpolation-, moment matching-, and least squares-based MOR methods \cite{ABG20}. On the other hand, stability enforcement in \texttt{VF} is guaranteed since part of the basic algorithm, which is undoubtedly a key factor for the \texttt{VF} success in engineering applications (see~\cite{grivet2015passive} and references therein).

Stability-preserving and passivity-preserving methods have been developed in the last two decades to cope with the need to construct reliable reduced models that are endowed with stability or passivity properties, being able to be accurately used as surrogates for the full-order models while retaining their properties. Many of these methods are based on constrained optimization or perturbation approaches, often applied a posteriori once stability or passivity violations are identified. A non-exhaustive list of approaches includes the following \cite{grivet2004passivity,antoulas2005new,triverio2007stability,morIonRA08,morReiS11,morReiW11,morPanJWetal13,morGosA16,morBenGV20,morBreU22,goyal2023inference,goyal2023guaranteed}, see also the reviews in~\cite{grivet2015passive} and~\cite[Chapter~5]{mon-2020-MOR-Vol1}.

The goal of this work is to propose a variation of the \texttt{AAA} algorithm that enforces the asymptotic stability of the computed model by construction, making the \texttt{AAA} approach even more appealing for engineering practice. 
Our derivations are inspired by previous works concerning stable MOR in the Sanathanan-Koerner (SK) framework~\cite{BraddeBernstein,ZancoPositive}. In these works, the structural stability of a barycentric model of the form $H(s)=\frac{N(s)}{D(s)}$, being $N(s)$ and $D(s)$ rational functions sharing the same asymptotically stable poles, is guaranteed by placing the zeros of $D(s)$ over the open left-half complex plane, i.e., by requiring this function to be minimum phase. The stable zero placement is practically achieved by constraining the denominator $D(s)$ to be a Strictly Positive Real (SPR) function~\cite{brogliato2007dissipative}, a condition that is easily achieved by enforcing suitable algebraic constraints on the free model coefficients. Since any SPR function is also minimum phase, the approach guarantees that all the zeros of the barycentric denominator have strictly negative real part, so that the stability of the model holds as a by-product. To formulate a \texttt{AAA} algorithm with certified stability, we expand on this pre-existing approach, generalizing the class of denominator functions that are admissible when an asymptotically stable model is required.
 
To do so, we first show that 
under mild assumptions, a barycentric \texttt{AAA} model $\hat H(s) = \frac{N(s)}{D(s)}$ is asymptotically stable if and only if the rational denominator $D(s)$ has only asymptotically stable zeros; second, exploiting the results of~\cite{ProofMP_ASPR}, we show that under the working conditions such requirement is equivalent to the enforcement of a relaxed positive realness condition on $D(s)$, which can be verified regardless of the location of the model's barycentric nodes (support points) chosen to define the model structure. Third, we show how the proposed condition is algebraically related to the free coefficients of the \texttt{AAA} model, i.e. its barycentric weights. Based on such characterization, we propose to replace the standard least-squares optimization stage of the \texttt{AAA} with a constrained counterpart, which admits only solutions associated with asymptotically stable models. Since the resulting optimization problem is non-convex with respect to the optimization variables, we propose an ad-hoc convex relaxation scheme that can be applied to efficiently solve for the model unknowns via semidefinite programming. Finally, we suggest how to incorporate the proposed optimization routine into the \texttt{AAA} iteration; the resulting algorithm, named stable \texttt{AAA} or \texttt{stabAAA}, retains the beneficial features of its unconstrained counterpart while guaranteeing that the resulting ROM is asymptotically stable. The effectiveness of the new approach is evaluated numerically on a number of practical MOR benchmarks selected from the mechanical, electrical, and acoustic domains, and compared with competing methods in terms of modeling performance.

The paper is organized as follows. In Sec.~\ref{sec:NotationAndProblemStatement} we introduce our notation and we provide a statement for the problem that we are solving. In Sec.~\ref{sec:AAA} we resume the necessary background material regarding the \texttt{AAA} algorithm, and the Loewner Framework, upon which it was designed. Sec.~\ref{sec:stabAAA} contains our main results in terms of stability characterization, enforcement, and formulation of the desired stable \texttt{AAA} algorithm. Sec.~\ref{sec:num} presents the results obtained by performing MOR of stable dynamical systems with the proposed approach and with other state-of-the-art related methods. Finally, conclusions are drawn in Sec.~\ref{sec:conc}.

\section{Notation and problem statement}
\label{sec:NotationAndProblemStatement}

In the following, $j=\sqrt{-1}$ and $s=\sigma + j\omega\in \IC$ denote respectively the imaginary unit and the Laplace variable. Lowercase and uppercase italic letters will be reserved for scalar numbers or functions of the $s$ variable respectively (e.g. $z=H(s): \IC\rightarrow \IC$). Given a complex number $z\in\IC$, $z^*$ represents its conjugate. The operators $\texttt{Re}\{\cdot\}$ and $\texttt{Im}\{\cdot\}$ return respectively the real and the imaginary parts of their arguments. Lowercase and uppercase blackboard bold letters will be used for vectors and matrices respectively (e.g. $\bx\in\IR^n$ and $\bX\in \IR^{n\times n}$). The matrices $\bX^T$ and $\bX^\star$ are the transpose and the Hermitian transpose of $\bX$. The matrix $\bI_n$ is the identity matrix of size $n$. The expression $\bJ\succ\bK$ ($\bJ\succeq\bK$) means that the matrix $\bJ-\bK$ is positive (semi)definite.

Let us consider a Single-Input Single-Output (SISO) LTI dynamical system $\Sig$ in descriptor form
\begin{equation}\label{def_lin_sys}
	\Sig: \begin{cases}
		\bE\,\dot{\boldsymbol{\zeta}}(t) & \!\!\!\!= \bA\, \boldsymbol{\zeta}(t) + \bB\, u(t) \\
		y(t)  & \!\!\!\!= \bC\, \boldsymbol{\zeta}(t) + \bD\, u(t)
	\end{cases},
\end{equation} 
with $\boldsymbol{\zeta}(t) \in \IR^{N}$ as the state variable, $u(t) \in \IR$ as the control input, and $y(t) \in \IR$ is the observed output. Consequently, we have $\bA,\bE \in \IC^{N \times N}$, $\bB \in \IC^{N \times 1}$, $\bC \in \IC^{1 \times N}$, $\bD \in \IR$. When the matrix $\bE$ is non-singular, system~\eqref{def_lin_sys} is equivalent to a state space realization for the system $\bSigma$. When $\bE=\bI_N$, we will use the notation $(\bA, \bB,\bC,\bD)$ to denote the state space realization resulting from~\eqref{def_lin_sys}. The transfer function associated to~\eqref{def_lin_sys} is given by $H(s): \IC \rightarrow \IC$
\begin{equation}\label{eqn:Htrf}
	H(s) = \bC (s \bE - \bA)^{-1} \bB +\bD,
\end{equation}
and is assumed to verify condition $H^*(s)=H(s^*)$, so that the impulse response of $\bSigma$ is real-valued. In particular, the poles of $H(s)$ are either real or appear in complex conjugate pairs. We assume that all the poles of $H(s)$ have strictly negative real part so that $\bSigma$ is asymptotically stable.

In our setting, the system $\bSigma$ is known only in terms of measurements of its transfer function, retrieved over a set of discrete sampling points located along the positive imaginary axis:
\begin{equation}\label{eq:realData}
   j\lambda_v \in \Gamma=\{j\lambda_1,\hdots, j\lambda_V\} \subset j\IR^+, \quad h_v=H(j\lambda_v).
\end{equation}
Starting from this data, our objective is to build an asymptotically stable reduced-order LTI system $\hat \Sig$ of (unknown) size $n\ll N$ with transfer function $\hat H(s)$
fulfilling the following conditions
\begin{equation}
    \hat H^*(s)=\hat H(s^*), \quad |\hat H(j\lambda_v)-h_v|<\epsilon \quad \forall j\lambda_v \in \Gamma,
\end{equation}
where $\epsilon>0$ is a user-defined error tolerance. Due to the stability requirements on $\hat \bSigma$, all the poles of $\hat H(s)$ must have a strictly negative real part. Therefore, our problem is to design a rational approximation strategy which concurrently 
\begin{enumerate}
     \item guarantees a user-prescribed error bound,\label{req:1}
     \item needs no information on the order $n$ of the resulting model function,\label{req:2}
    \item returns a real-valued and asymptotically stable model function $\hat H(s)$.\label{req:3}
\end{enumerate}

\section{Transfer function interpolation, barycentric forms, and the \texttt{AAA} algorithm}
\label{sec:AAA}

We start by recalling two established data-driven methods for rational approximation, i.e., the Loewner framework (LF) in \cite{MA07}, and the \texttt{AAA} algorithm in \cite{NST18}. We will be emphasizing mostly the latter approach, which is at the core of our current contribution. We discuss the model structure, realization, and enforcement of realness of \texttt{AAA} models, together with stability preservation and how it was handled in previous works in the literature.

\subsection{The Loewner framework}
Originally introduced in \cite{MA07}, the \texttt{LF} provides an elegant and direct answer to the generalized realization problem. This framework is based on processing the frequency domain measurements defined in~\eqref{eq:realData}. The interpolation problem is formulated according to a partitioning of the available samples into two disjoint subsets, by defining
\begin{equation}
   \Gamma_{\textrm R}=\{j\eta_1,\hdots, j\eta_{V_{\textrm R}}\}, \quad  \Gamma_{\textrm L}=\{j\mu_1,\hdots, j\mu_{V_{\textrm L}}\},\quad \Gamma_{\textrm R}\cup\Gamma_{\textrm L}=\Gamma, \quad \Gamma_{\textrm R}\cap\Gamma_{\textrm L}=\emptyset
\end{equation}
and the sets of \emph{right} and \emph{left} data pairs
\begin{align}\label{data_Loew}
	\begin{split}
		{\textrm{right data}}&: \cD_{\textrm R} = \{\left(j\eta_l,H(j\eta_l)\right), ~j\eta_l \in \Gamma_{\textrm R},~l=1,\ldots,V_{\textrm R}\},~{\textrm {and}}, \\
		{\textrm{left data}}&: \cD_{\textrm L}=\{\left(j\mu_i,H(j\mu_i)\right), ~j\mu_i \in \Gamma_{\textrm L}, ~i=1,\ldots,V_{\textrm L}\}.
	\end{split}
\end{align}
The approach seeks a rational function
$\hat{H}(s)$, such that the following interpolation conditions hold:
\begin{equation} \label{interp_cond}
	\hat{H}(j\mu_i)=H(j \mu_i),\quad\hat{H}(j\eta_l)=H(j\eta_l),\quad l=1,\ldots,V_{\textrm R},\quad i=1,\ldots,V_{\textrm L}.
\end{equation}
The Loewner matrix $\IL \in\IC^{V_{\textrm L} \times V_{\textrm R}}$ and the shifted Loewner matrix $\sIL \in\IC^{V_{\textrm L} \times V_{\textrm R}}$ play a crucial role in the \texttt{LF}, and are explicitly expressed as follows
\begin{equation} \label{Loew_mat}
	\IL_{(i,l)}=\frac{H(j \mu_i)-H(j\eta_l)}{j\mu_i-j\eta_l}, \ \ \sIL_{(i,l)}=
	\frac{j \mu_i H(j \mu_i)-j\eta_l H(j\eta_l)}{j\mu_i-j\eta_l},
\end{equation}
while the data vectors $\IV \in \IC^{V_\textrm{L}},\ \IW\trans \in \IC^{V_\textrm{R}}$ are given by $\left(\IV\right)_{i}= H(j \mu_i), \ \  \left(\IW\right)_{l} = H(j\eta_l),~\text{for}~i=1,\ldots,V_{\textrm L}$ and $l=1,\ldots, V_{\textrm R}$. The unprocessed Loewner surrogate model (provided that $V_{\textrm L}=V_{\textrm R}$) is composed of the matrices
\begin{align}\label{Loew_doub_sided}
	\hat{\bE}=-\IL,~~ \hat{\bA}=-\sIL,~~ \hat{\bB}=\IV,~~ \hat{\bC}=\IW,
\end{align}
and if the pencil $(\IL,\sIL)$ is regular, then the function $\hat{H}(s)$ satisfying the interpolation conditions in \cref{interp_cond} can be explicitly computed in terms of the matrices in \cref{Loew_doub_sided}, as $\hat{H}(s) = 	\hat{\bC} (s	\hat{\bE} - \hat{\bA})^{-1} \hat{\bB}$.

In practice, the pencil $(\sIL,\,\IL)$ is often singular. Hence, the Loewner model in \cref{Loew_doub_sided} needs to be post-processed. By computing two singular value decompositions (SVDs) of appropriately-arranged Loewner matrices, the dominant features can be extracted and inherent redundancies in the data can be removed. Possibly advantageous data splitting choices were proposed in \cite{morKarGA19a}. Finally, it is to be noted that the choice of well-suited interpolation points is crucial for the \texttt{LF}. Different choices were evaluated in \cite{morKarGA19a}, while a greedy strategy was proposed in \cite{cherifi2022greedy}. Additionally, issues such as stability preservation/enforcement, and passivity preservation, were tackled in the \texttt{LF} in \cite{morGosA16,morGosPA21a}, for the former, and in \cite{antoulas2005new,morBenGV20}, for the latter. For exhaustive tutorial contributions on \texttt{LF} for LTI systems, we refer the reader to \cite{ALI17} and \cite{morKarGA19a}. 

\subsection{The Adaptive Antoulas-Anderson (\texttt{AAA}) algorithm}

The \texttt{AAA} algorithm, as originally introduced in \cite{NST18}, is a greedy rational approximation scheme that iteratively increases the complexity of the rational approximant in order to meet a user-prescribed error tolerance over the available data samples. The main steps of the algorithm, which is intimately connected to the \texttt{LF}, are:
\begin{enumerate}
	\item Express the fitted rational approximants in a barycentric representation, which is a numerically stable way of representing rational functions \cite{berrut2004barycentric}.
	
	\item Select the next interpolation (support) points via a greedy scheme by enforcing interpolation at the point where the (absolute or relative) error at the previous step is maximal.

	\item Compute the other variables (the so-called barycentric weights) in order to enforce least squares approximation on the non-interpolated data.
\end{enumerate}	
The \texttt{AAA} algorithm was further extended and developed in recent years, including applications to nonlinear eigenvalue problems \cite{lietaert2022automatic}, MOR of parameterized linear dynamical systems \cite{CS20}, MOR of linear systems with quadratic outputs \cite{gosea2022data}, rational approximation of periodic functions \cite{baddoo2021aaatrig}, representation of conformal maps \cite{gopal2019representation}, rational approximation of matrix-valued functions \cite{GG20,morAumBGetal23,osti2005602}, signal processing with trigonometric rational functions \cite{wilber2022data}, approximations on continua sets \cite{driscoll2023aaa}, or reconstruction of sparse exponential sums \cite{derevianko2023esprit}. 

\subsubsection{\texttt{AAA} model structure and realization}

In the case of frequency response data, the rational approximant $\hat{H}(s)$ computed by the original \texttt{AAA} algorithm in \cite{NST18} is based, at any given step of the iteration, upon the barycentric form:
\begin{equation}\label{eq:AAAmodel}
	\hat{H}(s)=  \frac{\sum_{i=1}^k \displaystyle \frac{w_i h_i}{s-z_i}}{\sum_{i=1}^k \displaystyle \frac{w_i}{s-z_i}}=\frac{N(s)}{D(s)},
\end{equation}
where the $z_i \in \IC$ are the barycentric nodes (or support points), the $w_i$'s are the barycentric weights, the $h_i$'s are the barycentric values, and $N(s),D(s)$ are both rational functions. It is to be noted that the following interpolation conditions are satisfied, by construction, at the support points (provided that $w_i \neq 0$), i.e.: $\hat{H}(z_i) = h_i$ for all $1\leq i \leq k$. Typically, the values $h_i$'s are evaluations of the unknown transfer function $H(s)$ to be modeled at the $z_i$ locations, i.e., $h_i := H(z_i)$. In this case, we say that $\hat{H}(s)$ is a \textit{rational interpolant}, since it interpolates the original transfer function $H(s)$, at chosen support points $z_i$'s, i.e., $\hat{H}(z_i) = H(z_i)$.

For model order reduction applications, the support points are commonly placed on the imaginary axis, so that $z_i=j\lambda_i \in \Gamma$ as defined in~\eqref{eq:realData}. In this setting, for the requirement $\hat H^*(s)= \hat H(s^*)$ to hold, it is necessary to consider the following modified barycentric expression: 
\begin{align}\label{eq:AAAReal}
     \hat{H}(s) &=\frac{\sum_{i=1}^k \left( \frac{h_i w_i}{s -j \lambda_i} + \frac{(h_i w_i)^*}{s +j\lambda_i} \right) } {\sum_{i=1}^k \left(\frac{w_i}{s -j\lambda_i} + \frac{w_i^*}{s +j\lambda_i}\right)} = \frac{\sum_{i=1}^k N_i(s)}{\sum_{i=1}^k D_i(s)},
\end{align}
where $N_i(s) = \frac{h_i w_i}{s -j \lambda_i} + \frac{(h_i w_i)^*}{s +j\lambda_i}$ and $D_i(s) = \frac{w_i}{s -j\lambda_i} + \frac{w_i^*}{s +j\lambda_i}$.
This structure was previously used in \cite{morGosPA21a} and \cite{valera2021aaa}, and also in other subsequent publications.

The following Lemma provides a closed-form expression for realizing the system $\hat \Sig$ in descriptor form when its transfer function is structured as in~\eqref{eq:AAAReal}. See \Cref{sec:App1} for the full proof of the Lemma.
\begin{lemma}
\label{lemm:3.1}
As adapted from \cite{gosea2020rational}, a realization for the transfer function $\hat{H}(s)$ in \cref{eq:AAAReal} is given by
\begin{align}\label{ROM_SISO_improper}
	\begin{split}
		\tilde{\bA} &= \bLambda - \tilde{\bB} \tilde{\bR}  \in \IC^{(2k+1) \times (2k+1)}, \ \ \tilde{\bB} = \begin{bmatrix} w_1 & w_1^* & \cdots & w_k & w_k^* & 1 \end{bmatrix}\trans \in \IC^{(2k+1) \times 1}, \\ \tilde{\bC} &=  \begin{bmatrix} h_1& h_1^* & \cdots &  h_k & h_k^* & 0
		\end{bmatrix} \in \IC^{1 \times (2k+1)}, \  \tilde{\bE} = \operatorname{diag}(1,1,\ldots,1,0) \in \IC^{(2k+1) \times (2k+1)},\\
  \bLambda &= \operatorname{diag}(j\lambda_1,-j\lambda_1,\ldots,j\lambda_k,-j\lambda_k,1) \in \IC^{(k+1) \times (k+1)}, \  \tilde{\bR} = \begin{bmatrix}
      1 & 1 & \cdots & 1
  \end{bmatrix} \in \IC^{1 \times (2k+1)},
	\end{split}
\end{align}
i.e., its transfer function, given by $\tilde{H}(s) = \tilde{\bC} (s \tilde{\bE} - \tilde{\bA})^{-1} \tilde{\bB}$, satisfies the equality $\tilde{H}(s) = 	\hat{H}(s) $.
\end{lemma}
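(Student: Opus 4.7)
My approach is to compute the transfer function $\tilde{C}(s\tilde{E}-\tilde{A})^{-1}\tilde{B}$ directly by exploiting the rank-one structure visible in the definition of $\tilde{A}$. Writing
\begin{equation*}
s\tilde{\bE}-\tilde{\bA} \;=\; (s\tilde{\bE}-\bLambda) \;+\; \tilde{\bB}\,\tilde{\bR},
\end{equation*}
I observe that $s\tilde{\bE}-\bLambda$ is diagonal with entries $s-j\lambda_i$, $s+j\lambda_i$ for $i=1,\ldots,k$, and (crucially) $-1$ in the last slot, since the last diagonal entry of $\tilde{\bE}$ is $0$ and the last diagonal entry of $\bLambda$ is $1$. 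Hence $s\tilde{\bE}-\bLambda$ is invertible for every $s$ outside $\{\pm j\lambda_i\}$, and $\tilde{\bB}\tilde{\bR}$ is a rank-one perturbation to which the Sherman--Morrison identity applies.

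The main step is then to apply Sherman--Morrison to obtain
\begin{equation*}
\tilde{\bC}(s\tilde{\bE}-\tilde{\bA})^{-1}\tilde{\bB} \;=\; \frac{\tilde{\bC}(s\tilde{\bE}-\bLambda)^{-1}\tilde{\bB}}{1 \,+\, \tilde{\bR}(s\tilde{\bE}-\bLambda)^{-1}\tilde{\bB}},
\end{equation*}
after the routine cancellation $\alpha - \frac{\alpha\beta}{1+\beta} = \frac{\alpha}{1+\beta}$ with $\alpha:=\tilde{\bC}(s\tilde{\bE}-\bLambda)^{-1}\tilde{\bB}$ and $\beta:=\tilde{\bR}(s\tilde{\bE}-\bLambda)^{-1}\tilde{\bB}$. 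Because $(s\tilde{\bE}-\bLambda)^{-1}$ is diagonal, both $\alpha$ and $\beta$ are immediate sums. The last row/column contributes $0\cdot(-1)\cdot 1=0$ to $\alpha$ and $1\cdot(-1)\cdot 1=-1$ to $\beta$, while the conjugate pairs contribute $\frac{h_i w_i}{s-j\lambda_i}+\frac{h_i^* w_i^*}{s+j\lambda_i}=N_i(s)$ and $\frac{w_i}{s-j\lambda_i}+\frac{w_i^*}{s+j\lambda_i}=D_i(s)$ respectively. Substituting these yields $\alpha=\sum_{i=1}^k N_i(s)$ and $1+\beta=\sum_{i=1}^k D_i(s)$, so the quotient equals $\hat{H}(s)$ as given in~\eqref{eq:AAAReal}.

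The only subtle point is the role of the augmenting last coordinate of the descriptor realization: its presence forces the $+1$ of Sherman--Morrison to be absorbed by the $-1$ contribution from $(s\tilde{\bE}-\bLambda)^{-1}_{2k+1,2k+1}$, which is exactly what converts the affine-looking denominator $1+\beta$ into the purely barycentric denominator $\sum_i D_i(s)$. I expect this bookkeeping to be the most error-prone part of the argument, so I would verify it explicitly for a single term ($k=1$) before writing the general sum; the rest of the calculation is then symbolic and straightforward. No additional non-triviality arises, since the identity $\tilde{H}(s)=\hat{H}(s)$ holds as an equality of rational functions and therefore extends by analytic continuation to the removable singularities at $s=\pm j\lambda_i$.
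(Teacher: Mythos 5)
Your proposal is correct and follows essentially the same route as the paper's own proof in the appendix: split off the rank-one term $\tilde{\bB}\tilde{\bR}$, apply Sherman--Morrison to obtain $\tilde{H}(s)=\tilde{\bC}\tilde{\bLambda}_s^{-1}\tilde{\bB}\big/\bigl(1+\tilde{\bR}\tilde{\bLambda}_s^{-1}\tilde{\bB}\bigr)$ with $\tilde{\bLambda}_s=s\tilde{\bE}-\bLambda$, and let the $-1$ in the last diagonal slot cancel the $+1$ in the denominator while the zero last entry of $\tilde{\bC}$ kills the corresponding numerator term. The remaining diagonal sums give $\sum_i N_i(s)$ and $\sum_i D_i(s)$ exactly as in the paper.
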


In some applications, especially in electrical engineering, it is sometimes desired that the input to state vector $\bB$ (for the SISO case) contain ones only, with the row vector $\bC$ storing the residues of the transfer function. This can always be achieved by constructing an equivalent realization, as detailed in~\Cref{rem:remApp2}. Equally important, the topic of purely real-valued realizations arises next. For more details on the exact procedure of how to obtain such realizations, we refer the reader to \Cref{rem:remApp3}.

\subsubsection{The real-valued \texttt{AAA} algorithm}
In the following, we summarize the \texttt{AAA} algorithm when the model structure is constrained to obey equation~\eqref{eq:AAAReal}. Assuming the availability of a data set in form~\eqref{eq:realData}, the \texttt{AAA} algorithm builds the final model~\eqref{eq:AAAReal} following a greedy iterative procedure, driven by an error minimization criterion. This iteration is aimed at returning a model which satisfies
\begin{equation}
    \max_{v=1,\hdots V} | \hat{H}(j\lambda_v)-h_v|\leq \epsilon, \quad j\lambda_v\in \Gamma
\end{equation}
where $\epsilon>0$ is an admissible approximation error tolerance. By defining the iteration index $\ell$, and denoting with $\hat{H}_{\ell}(s)$ the model obtained at the $\ell$-th iteration, the algorithm is initialized by defining for $\ell = 0$
\begin{equation}
    \hat{H}_0(s)=V^{-1}\sum_{v=1}^V h_v. \qquad \Gamma^{(0)}=\Gamma.
\end{equation}
The $\ell$-th iteration starts by determining the sample point $j\lambda_\ell$ belonging to the current test set $\Gamma^{(\ell-1)}$ over which the maximum error value is attained,
\begin{equation}\label{eq:maxAAAerror}
    j\lambda_\ell = \arg \max_{j\lambda \in \Gamma^{(\ell-1)}} |H(j\lambda)-\hat{H}_{\ell-1}(j\lambda)|,
\end{equation}
and updating the definition of the model and of the test set consequently
\begin{equation}\label{eq:modelUpdate}
     \hat{H}_\ell(s):= \frac{\sum_{i=1}^\ell \left( \frac{h_i^{(\ell)} w_i^{(\ell)}}{s -j \lambda_i} + \frac{(h_i^{(\ell)} w_i^{(\ell)})^*}{s +j\lambda_i} \right) } {\sum_{i=1}^\ell\left(\frac{w_i^{(\ell)}}{s -j\lambda_i} + \frac{(w_i^{(\ell)})^*}{s +j\lambda_i}\right)} \qquad \Gamma^{(\ell)}:= \Gamma^{(\ell-1)}\setminus \{j\lambda_\ell\}.
\end{equation}
The complex weights $w_i^{(\ell)}$ represent the current model unknowns, that are found by enforcing the linearized approximation of $\hat{H}_\ell(s)$ against $H(s)$ over the updated test set $\Gamma^{(\ell)}$:
\begin{align}\label{eqn:LSfit_Loew1}
			\begin{split}
				&\displaystyle \sum_{i=1}^\ell\left(\frac{w_i^{(\ell)}}{s -j\lambda_i} + \frac{(w_i^{(\ell)})^*}{s +j\lambda_i}\right) H(s) \approx \sum_{i=1}^\ell\left( \frac{h_i^{(\ell)} w_i^{(\ell)}}{s -j \lambda_i} + \frac{(h_i^{(\ell)} w_i^{(\ell)})^*}{s +j\lambda_i} \right)  \\
    & \Leftrightarrow \ \  \Big{(} \sum_{i=1}^\ell\frac{H(s) - h_i^{(\ell)}}{s - j\lambda_i}  w_i^{(\ell)}\Big{)} + \Big{(} \sum_{i=1}^\ell\frac{H(s) - (h_i^{(\ell)})^*}{s + j\lambda_i}  (w_i^{(\ell)})^*\Big{)} := E^{(\ell)}(s) \approx 0  \quad \forall s\in \Gamma^{(\ell)}.
			\end{split}
		\end{align}
  By collecting the unknowns in the vector $\bw^{(\ell)} = [w_1^{(\ell)},(w_1^{(\ell)})^*,\ldots,w_\ell^{(\ell)},(w_\ell^{(\ell)})^*]^T$, condition~\eqref{eqn:LSfit_Loew1} is enforced by solving the following homogeneous least-squares problem
  \begin{equation}\label{eq:LoewnerMinimization}
      \bw_\textrm{opt}^{{(\ell)}}= \arg \min_{|| \bw^{(\ell)}||_2=1}  ||\IL^{(\ell)} \bw^{(\ell)}||^2_2 \qquad \IL^{(\ell)} \in \mathbb{C}^{(V-\ell)\times 2\ell}
  \end{equation}
  where $\IL^{(\ell)}$ is the Loewner matrix and the constraint $||\bw^{(\ell)}||_2=1$ is introduced to rule out the trivial zero solution. The minimizer $\bw_\textrm{opt}^{(\ell)}$ is recognized as the right singular vector of $\IL^{(\ell)}$ associated to its least singular value. The algorithm stops whenever the maximum error (computed according to the cost function in~\eqref{eq:maxAAAerror}) is below the threshold tolerance $\epsilon$.
  
From a numerical standpoint, computing the minimizer of problem~\eqref{eq:LoewnerMinimization} via SVD does not guarantee that the coefficients  $w_i^{(\ell)}$ are obtained as exact complex conjugate pairs. Since this condition is necessary to ensure that $\hat H_{\ell}(s)$ is real-valued, the optimization problem~\eqref{eq:LoewnerMinimization} must be reformulated accordingly. This can be done by changing the optimization variables and rewriting the optimization problem in terms of real quantities. First, note that the approximation condition~\eqref{eqn:LSfit_Loew1} is equivalent to
\begin{equation}\label{eq:realimagapprox}
      \texttt{Re}(E^{(\ell)}(s))\approx 0, \quad \texttt{Im}(E^{(\ell)}(s))\approx 0, \qquad \forall s \in \Gamma^{(\ell)}
  \end{equation}
when both are are enforced in a least-squares sense; then, by observing the conditions in~\eqref{eq:realimagapprox} are linear in the unknowns \mbox{$\alpha_i^{(\ell)}=\texttt{Re}(w_i^{(\ell)})$} and $\beta_i^{(\ell)}=\texttt{Im}(w_i^{(\ell)})$, we can define the following vector of unknowns, $\bx^{(\ell)}=[ \alpha_1^{(\ell)},\beta_1^{(\ell)}\hdots \alpha_\ell^{(\ell)},\beta_\ell^{(\ell)} ]^T$, and enforce~\eqref{eq:realimagapprox} by solving the following homogeneous least-squares problem
\begin{equation}\label{eq:LoewnerMinimizationReal}
      \bx_\textrm{opt}^{{(\ell)}}= \arg \min_{|| \bx^{(\ell)}||_2=1}  ||\IL_{\textrm{real}}^{(\ell)} \bx^{(\ell)}||_2, \ \ \text{with} \ \ \IL_{\textrm{real}}^{(\ell)} \in \mathbb{R}^{2(V-\ell)\times 2\ell},
\end{equation}
where now, all the involved quantities are real-valued and the row dimension of the matrix $\IL_{\textrm{real}}^{(\ell)}$ is twice that of $\IL^{(\ell)}$ since the approximation conditions on real and imaginary components are considered separately. The formal definition of the real-valued \enquote{quasi-Loewner} matrix is explicitly provided in Appendix~\ref{app:MatrixStructure}. Algorithm~\ref{al:aaa} summarizes the above procedure in a pseudocode format.
\begin{algorithm}[tb]
	\caption{The modified \texttt{AAA} algorithm with enforced realness.}
	\label{al:aaa}
	\begin{algorithmic}[1]
		\Require A (discrete) set of sample points $j\lambda_v, v=1,\hdots V \in \Gamma \subset j\mathbb{R}^+$ and corresponding target transfer function values, $h_v=H(j\lambda_v)$. An error tolerance $\epsilon>0$.
		\Ensure A rational approximant $\hat H(s)$ displayed in a barycentric form.
		\State Initialize $\ell=1$, $\Gamma^{(0)} \gets \Gamma$, and $\hat{H}_{0}(s) \gets V^{-1}\sum_{v=1}^{V} h_v$.
		\While{$\max_{j\lambda \in \Gamma^{(\ell-1)}} |H(j\lambda)-\hat{H}_{\ell-1}(j\lambda)| > \epsilon$}
        \State Set $j\lambda_{\ell}=\arg \max_{j\lambda \in \Gamma^{(\ell-1)}} |H(j\lambda)-\hat{H}_{\ell-1}(j\lambda)|$ and define $\hat H_{\ell}(s)$ and $\Gamma^{(\ell)}$ according to~\eqref{eq:modelUpdate}
  \State Solve for $\bx_\textrm{opt}^{{(\ell)}}= \arg \min_{|| \bx^{(\ell)}||_2=1}  ||\IL_{\textrm{real}}^{(\ell)} \bx^{(\ell)}||_2$, with $\bx_\textrm{opt}^{(\ell)}=[ \alpha_{1,\textrm{opt}}^{(\ell)},\beta_{1,\textrm{opt}}^{(\ell)}\hdots \alpha_{\ell,\textrm{opt}}^{(\ell)},\beta_{\ell,\textrm{opt}}^{(\ell)} ]$.
   \State $w_i^{(\ell)}\gets\alpha_{i,\textrm{opt}}^{(\ell)}+j\beta_{i,\textrm{opt}}^{(\ell)}$.
		\State $\ell \gets \ell+1$.
		\EndWhile
  \State return $\hat H_{\ell-1}(s)$
	\end{algorithmic}
\end{algorithm}

\subsection{Stability of \texttt{AAA} models}

The \texttt{LF} and the \texttt{AAA} algorithm have proven to be reliable and efficient methods for the computation of interpolatory rational functions. The accuracy of the interpolant can be defined arbitrarily, making the methods suited for reduced-order modeling of dynamical systems. Also, the greedy approach pursued by the \texttt{AAA} algorithm allows not to a priori specify the order of the ROM $\hat\Sig$, thus automating the modeling procedure.
However, by construction, the algorithm is not guaranteed to return a rational approximation with Hurwitz poles, so that the resulting ROM $\hat \Sig$ may exhibit unstable dynamics even when the underlying system $\Sig$ is asymptotically stable. This condition is likely to occur when the algorithm is not required to fit exactly the underlying data, i.e. when the error threshold $\epsilon$ is not extremely aggressive, a common scenario in engineering applications. When the ROM $\hat \Sig$ lacks stability, it becomes basically useless for a number of practical applications, in particular for any time-domain simulation.

The problem of enforcing stable models using \texttt{LF} or \texttt{AAA} has been addressed in previous works. Enforcing stability in terms of \texttt{LF} is typically realized with a post-processing step incorporating pole flipping, truncation of unstable poles, or a combination of both \cite{morGosA16,kergus2020data,carrera2021improving,osti2005602,valera2021aaa,aumann2023practical}. In \cite{Cool22}, stability-preserving reduction techniques in the Loewner framework were proposed for the efficient time-domain simulation of dynamical systems with damping treatments.

Similar techniques as in \cite{morGosA16} have been included in the \texttt{AAA} framework in \cite{morGosPA21a}, where an optimization problem is solved to find the closest stable approximation to the originally unstable \texttt{AAA} interpolant. 
A different trajectory is suggested in \cite{deckers2022time}, where the original barycentric formulation of the \texttt{AAA} approximant is replaced by a different rational basis. In this representation, the unstable poles of a \texttt{AAA} approximation can be truncated. The updated weights are then recomputed by solving a least squares problem. Similarly, in \cite{osti2005602}, the unstable poles are replaced with their stable reflection, and the corresponding weights are adapted accordingly. Alternatively, it is also possible to pre-define the poles of the surrogate model, i.e., such that no post-processing step is required.
It was shown in \cite{aumann2023practical} that in order to place the poles of the \texttt{AAA} approximant, one needs to solve a linear system of equations involving a Cauchy matrix. 

It is to be noted that, although effective in general, none of the above approaches is guaranteed to return the optimal stable \texttt{AAA} model: for any obtained ROM with order $n$, there may exist a stable model that approximates the available data with smaller residual error. In this view, designing a \texttt{AAA} algorithm with guaranteed stability is still an open research field. In the following section, we present our novel approach to characterize algebraically the asymptotic stability of models generated with the \texttt{AAA} algorithm, as well as to enforce stability during the model generation.

\section{Stability characterization and enforcement for \texttt{AAA} models}
\label{sec:stabAAA}

When dealing with rational functions in barycentric form, such as~\eqref{eq:AAAmodel} or~\eqref{eq:AAAReal}, a strict relation holds between the poles of $\hat H(s)$ and the zeros of the denominator function $D(s)$. As will be more precisely formalized next, the poles of $\hat H(s)$  can be found only at locations where $D(s)$ exhibits a zero. Based on this observation, we propose to guarantee the stability of $\hat H(s)$ by placing such zeros over the open left-half complex plane while solving for the model unknowns. 

Inspired by previous works related to the stability enforcement of rational barycentric models in the Sanathanan-Koerner approximation framework~\cite{BraddeBernstein,ZancoPositive}, we achieve the goal by studying the positive realness properties of the denominator function $D(s)$. We show that this approach provides an algebraic characterization for the stability of $\hat H(s)$ in terms of the barycentric weights $w_i$. We exploit such characterization to formulate a set of constraints on the model unknowns that, under non-restrictive conditions, are necessary and sufficient for $\hat H(s)$ being asymptotically stable.

\subsection{Stability characterization}
We start by recalling some basic definitions.
\begin{definition}[Relative Degree]
Let
\begin{equation}\label{eq:tfPoly}
    F(s)=\frac{f_1(s)}{f_2(s)}
\end{equation}
be a scalar rational function of $s$ and let $f_1$ and $f_2$ be coprime polynomials with $\emph{deg}~f_1(s)=d_1$ and $\emph{deg}~f_2(s)=d_2$. Then the relative degree of $F(s)$ is $r=d_2-d_1$, while the McMillan degree of $F(s)$ is $m = \max{(d_1,d_2)}$.
\end{definition}

\begin{definition}[Minimum Phase~\cite{ilchmann1993non}]\label{def:MinimumPhase}
Let $f_1(s)$ and $f_2(s)$ in~\eqref{eq:tfPoly} be coprime polynomials. Then $F(s)$ is said to be Minimum Phase (MP) if
\begin{equation}\label{eq:noUnstableZeros}
   f_1(s)\neq0 \quad \forall s: \emph{\texttt{Re}}(s)\geq 0.
\end{equation}
Additionally, a state space system $(\bA,\bB,\bC,\bD)$ with $F(s)=\bD+\bC(s\bI_n-\bA)^{-1}\bB$ is said to be MP if it is stabilizable, detectable~\cite{ACA05} and~\eqref{eq:noUnstableZeros} holds.
\end{definition}
With these definitions, we can already state the following
\begin{proposition}\label{prop:MPStability}
    Let 
    \begin{equation}{\label{eq:barycentric}}
        \hat{H}(s)=  \frac{\sum_{i=1}^k \displaystyle \frac{w_i h_i}{s-z_i}}{\sum_{i=1}^k \displaystyle \frac{w_i}{s-z_i}}=\frac{N(s)}{D(s)}=\frac{\frac{p(s)}{l(s)}}{\frac{q(s)}{l(s)}} = \frac{p(s)}{q(s)}
    \end{equation}
    with $w_i\neq 0$, $N(s):=\sum_{i=1}^k \displaystyle \frac{w_i h_i}{s-z_i}$, $D(s):=\sum_{i=1}^k \displaystyle \frac{w_i}{s-z_i}$, and with $p(s),q(s),l(s)$ polynomials in $s$.
    Then, the following statements hold true
    \begin{itemize}
    \item Whenever $D(s)$ is Minimum Phase, $\hat{H}(s)$ is asymptotically stable.
    \item Assuming that the polynomials $p(s),q(s)$ share no roots in the closed right-half complex plane, then the poles of $\hat{H}(s)$ are asymptotically stable if and only if $D(s)$ is Minimum Phase       
    \end{itemize}
\end{proposition}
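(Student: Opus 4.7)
The plan is to pass from the rational forms of $N(s)$ and $D(s)$ to the polynomial representation $\hat H(s)=p(s)/q(s)$ already written in~\eqref{eq:barycentric}, and then to read off stability directly from the roots of $q(s)$. The key preliminary fact I would establish is that the polynomials $q(s)$ and $l(s)$ are automatically coprime: evaluating at a support point yields $q(z_j)=w_j\prod_{i\neq j}(z_j-z_i)$, which is nonzero because $w_j\neq 0$ by assumption and the nodes $z_1,\ldots,z_k$ are distinct. Hence $D(s)=q(s)/l(s)$ is already in coprime polynomial form and, by \Cref{def:MinimumPhase}, the condition \enquote{$D(s)$ is Minimum Phase} is equivalent to the purely algebraic statement \enquote{every root of $q(s)$ satisfies $\texttt{Re}(s)<0$}.

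Granted this equivalence, the first bullet is immediate. Writing $\hat H(s)=p(s)/q(s)$, the poles of $\hat H$ are a subset of the roots of $q(s)$ (namely, the roots of $q/\gcd(p,q)$). If $D(s)$ is MP, every root of $q(s)$ lies in the open left-half plane, hence so does every pole of $\hat H(s)$, which is asymptotic stability.

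For the second bullet the forward implication is exactly what was just shown, so only the converse requires argument. I would prove it by contraposition: suppose $D(s)$ is not MP, so that $q(s_0)=0$ for some $s_0$ with $\texttt{Re}(s_0)\ge 0$. The standing assumption that $p(s)$ and $q(s)$ share no root in the closed right-half plane forces $p(s_0)\neq 0$, so $s_0$ survives as a genuine pole of $\hat H(s)=p(s)/q(s)$, which destroys asymptotic stability. Contrapositively, stability of $\hat H(s)$ implies that $D(s)$ is MP.

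The only step that requires real care is the first one, namely justifying that the zeros of the rational function $D(s)$ in the sense of \Cref{def:MinimumPhase} really are the roots of the polynomial $q(s)$ and nothing else; this is why the computation $q(z_j)\neq 0$ has to be carried out explicitly. Once that coprimeness is in hand, both directions collapse to root-counting on a single polynomial, and the role of the coprimeness hypothesis on $p$ and $q$ in the second bullet becomes transparent: it precisely rules out the pathological scenario in which an unstable zero of $D(s)$ is hidden from $\hat H(s)$ by a cancelling root of $p(s)$.
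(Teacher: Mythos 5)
Your proof is correct and follows essentially the same route as the paper's: establish that $q(s)$ and $l(s)$ are coprime, identify the poles of $\hat H(s)$ with (a subset of) the roots of $q(s)$, and reduce both bullets to root location of $q$. The only cosmetic difference is that you verify coprimality by the explicit evaluation $q(z_j)=w_j\prod_{i\neq j}(z_j-z_i)\neq 0$, whereas the paper counts the poles of $D(s)$; both arguments rest on $w_i\neq 0$ and the distinctness of the support points.
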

\begin{proof}
    First, notice that when $w_i\neq 0$, the function $D(s)={\frac{q(s)}{l(s)}}$ has exactly $k$ poles, hence $l(s)$ is of degree $k$ and is coprime with $q(s)$. Then, according to Definition~\ref{def:MinimumPhase}, when all the roots of $q(s)$ have a strictly negative real part, $D(s)$ is MP. Since the poles of $\hat H(s)$ are the roots of $q(s)$ (or a subset of them when zero-poles cancellations occur), MP-ness of $D(s)$ suffices to guarantee that $\hat H(s)$ is asymptotically stable. This proves the first statement of the proposition.

    When $p(s)$ and $q(s)$ share no roots in the closed right-half complex plane, then $\hat H(s)$ is asymptotically stable if and only if all the roots of $q(s)$ have a strictly negative real part. Hence, in this case, the MP-ness of $D(s)$ is equivalent to the stability of $\hat H(s)$, and the second statement is proved.

\end{proof}
The following represent classical definitions of Positive Real (PR) and Strictly Positive Real (SPR) scalar rational functions.
\begin{definition}[Positive Real (PR) function~\cite{brogliato2007dissipative}]
    A rational function $F(s)$ is positive real if
    \begin{enumerate}
        \item $F(s)$ has no poles in $\emph{\texttt{Re}}(s)>0$
        \item $F(s)$ is real for all positive real $s$
        \item $\emph{\texttt{Re}}(F(s))\geq0$ for all $\emph{\texttt{Re}}(s)>0$
    \end{enumerate}
\end{definition}
\begin{definition}[Strictly Positive Real (SPR) function~\cite{brogliato2007dissipative}]\label{def:SPRfrequency}
    A rational function $F(s)\in \IC$ that is not identically zero for all $s$ is strictly positive real if $F(s-\tau)$ is PR for some $\tau>0$.
\end{definition}

A well-known property of SPR transfer functions~\cite{brogliato2007dissipative} is that if $F(s)$ is SPR, so is its inverse $1/F(s)$. Since the SPR condition requires the regularity of the transfer function over the closed right-half complex plane, the property implies that both the poles and the zeros of an SISO SPR transfer function must be asymptotically stable; therefore any SISO SPR transfer function is also MP.

The following result, which was proven in~\cite{taylor1974strictly}, represents a special case of the well-known Positive Real Lemma for SISO transfer functions and provides the link between the state space representation of an LTI system and the positive realness of its transfer function.
\begin{theorem}\label{th:LYPlemma}
    Consider a SISO state space $(\bA,\bB,\bC,\bD)$ with $F(s)=\bD+\bC(s\bI_n-\bA)^{-1}\bB$. Suppose that $\det(s\bI_n-\bA)$ has only zeros in the open left half complex plane. Suppose $(\bA,\bB)$ is controllable and let $c>0$, $\bL=\bL^T\succ 0$ be an arbitrary real symmetric positive definite matrix. Then a real vector $\bq$ and a real matrix $\bP=\bP^T\succ 0$ satisfying
    \begin{equation}
        \begin{cases}
            \bP\bA + \bA^T \bP = -\bq \bq^T -c \bL \\
            \bP \bB - \bC^T=\sqrt{2\bD}\bq,
        \end{cases}
    \end{equation}
    exist if and only if $F(s)$ is SPR and $c$ is sufficiently small.
\end{theorem}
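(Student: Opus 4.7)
The plan is to treat the two implications of this classical Positive Real Lemma separately, both pivoted on the spectral identity satisfied by $F(s)+F(-s)$. For the sufficiency direction, I assume $\bP\succ 0$ and $\bq$ satisfy the two matrix equations and derive an explicit spectral representation. Setting $\bN(s)=(s\bI_n-\bA)^{-1}\bB$ and $\bM(s)=\bB\trans(-s\bI_n-\bA\trans)^{-1}$, the trick is to rewrite $\bA\trans\bP+\bP\bA=(\bA\trans+s\bI_n)\bP+\bP(\bA-s\bI_n)$ and sandwich this identity between $\bM(s)$ and $\bN(s)$. The two linear terms telescope, yielding $\bB\trans\bP\bN(s)+\bM(s)\bP\bB=\bM(s)(\bq\bq\trans+c\bL)\bN(s)$; substituting the second equation $\bP\bB=\bC\trans+\sqrt{2\bD}\,\bq$ and adding $2\bD$ to both sides produces
\begin{equation*}
F(s)+F(-s)=\bigl(\sqrt{2\bD}-\bM(s)\bq\bigr)\bigl(\sqrt{2\bD}-\bq\trans\bN(s)\bigr)+c\,\bM(s)\bL\bN(s).
\end{equation*}
Evaluated at $s=j\omega$ this is a sum of two nonnegative scalars, and the second one is strictly positive for every $\omega\in\IR$ because $\bL\succ 0$ and $\bN(j\omega)\neq\bfz$ by controllability (and $\bM(j\omega)=\bN(j\omega)^\star$). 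Combined with $\bA$ being Hurwitz, this gives SPR-ness of $F$ through the standard frequency-domain characterization.

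For the necessity direction, the plan is a converse spectral factorization. If $F$ is SPR, then $\Phi(s):=F(s)+F(-s)$ is an even rational function that is strictly positive on $j\IR$; for sufficiently small $c>0$, the modified function $\Phi(s)-c\,\bM(s)\bL\bN(s)$ remains nonnegative on $j\IR$ and therefore admits a stable spectral factor of the form $\sqrt{2\bD}-\bq\trans\bN(s)$ with real $\bq$, obtained by matching the McMillan degree and the asymptotic behavior at infinity. Once $\bq$ is in hand, the first matrix equation becomes a Lyapunov equation with Hurwitz $\bA$ and a negative definite right-hand side, so it admits a unique symmetric solution $\bP$, which is automatically positive definite thanks to controllability and positivity of $\bq\bq\trans+c\bL$. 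Finally, matching residues in the spectral identity derived in the sufficiency step yields the second equation $\bP\bB-\bC\trans=\sqrt{2\bD}\,\bq$, closing the argument.

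The main obstacle I anticipate is the necessity direction and in particular the quantitative choice of $c$. One must keep $c$ small enough that $\Phi(s)-c\bM(s)\bL\bN(s)$ stays nonnegative on the imaginary axis and preserves the McMillan degree needed for a spectral factor of the prescribed form; a safe choice is any $c$ below $\inf_{\omega\in\IR}\Phi(j\omega)/(\bM(j\omega)\bL\bN(j\omega))$, and establishing that this infimum is strictly positive requires exploiting both strict positivity of $\Phi(j\omega)$ and the prescribed behavior of $F$ at infinity (SPR-ness rules out the infimum being attained in the limit). Once a valid $c$ is fixed, the spectral factorization step itself is standard, and the remaining algebraic manipulations, the existence and positive-definiteness of $\bP$ from the Lyapunov equation, and the verification that the residue at infinity matches $\sqrt{2\bD}$ are routine.
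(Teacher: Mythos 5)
The paper does not prove this statement: it is quoted as a known special case of the Positive Real Lemma, with the proof attributed to the cited reference (Taylor, 1974). Your proposal reconstructs exactly that classical argument --- the telescoping identity $F(s)+F(-s)=\bigl(\sqrt{2\bD}-\bM(s)\bq\bigr)\bigl(\sqrt{2\bD}-\bq\trans\bN(s)\bigr)+c\,\bM(s)\bL\bN(s)$ for sufficiency, and spectral factorization of $F(s)+F(-s)-c\,\bM(s)\bL\bN(s)$ plus a Lyapunov equation for necessity --- and the algebra checks out, so there is nothing genuinely different to compare. The only points worth pinning down when writing it up in full are (i) in the sufficiency direction, the behavior at infinity when $\bD=0$: you need $\lim_{\omega\to\infty}\omega^2\,\texttt{Re}\,F(j\omega)=\tfrac{c}{2}\,\bB\trans\bL\bB>0$ (which your identity delivers, using $\bB\neq\bfz$) to conclude SPR-ness in the sense of Definition 4.5; and (ii) in the necessity direction, the final step recovering $\bP\bB-\bC\trans=\sqrt{2\bD}\,\bq$ reduces to showing that a strictly proper $\bv\trans(s\bI_n-\bA)^{-1}\bB$ which is odd in $s$ must vanish, and it is precisely there that controllability of $(\bA,\bB)$ is used to force $\bv=\bfz$.
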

For state-space systems without feedthrough term, we will make use of the following equivalent statement based on a Linear Matrix Inequality (LMI).

\begin{theorem}
    Let $F(s)=\bC(s\bI_n-\bA)^{-1}\bB$, with  $(\bA,\bB,\bC)$ fulfilling the assumptions of \Cref{th:LYPlemma}. Then the conditions
    \begin{align}
        &\bP\bA + \bA^T \bP\prec 0 \label{eq:KYP1}\\
        &\bP\bB=\bC^T \label{eq:KYP2}
    \end{align}
    are fulfilled with $\bP=\bP^T\succ 0$ if and only if $F(s)$ is SPR.
\end{theorem}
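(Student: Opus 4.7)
The plan is to obtain this LMI characterization as a direct corollary of Theorem~\ref{th:LYPlemma} by specializing to the feedthrough-free case. Indeed, setting $\bD=0$ in the Positive Real Lemma collapses its two equations to the ones appearing in the statement: the inhomogeneous term $\sqrt{2\bD}\,\bq$ in the second equation vanishes, yielding exactly $\bP\bB=\bC\trans$; and the right-hand side of the Lyapunov equation reduces to $-\bq\bq\trans - c\bL$, which is strictly negative definite whenever $\bL=\bL\trans\succ 0$ and $c>0$, because $\bq\bq\trans\succeq 0$.

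For the forward direction (SPR $\Rightarrow$ LMI), I would just invoke Theorem~\ref{th:LYPlemma}: since $F(s)$ is SPR with $\bD=0$, there exist $\bP=\bP\trans\succ 0$, a real vector $\bq$, some $c>0$, and some $\bL=\bL\trans\succ 0$ such that $\bP\bA+\bA\trans\bP = -\bq\bq\trans - c\bL$ and $\bP\bB-\bC\trans = 0$. The first equality immediately gives $\bP\bA+\bA\trans\bP\prec 0$ (strict, due to $c\bL\succ 0$), and the second is exactly \eqref{eq:KYP2}. So the conditions \eqref{eq:KYP1}--\eqref{eq:KYP2} hold with the same $\bP$.

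For the reverse direction (LMI $\Rightarrow$ SPR), I would construct, from a feasible $\bP$ of the LMI, a decomposition that fits the hypotheses of Theorem~\ref{th:LYPlemma}. Define
\begin{equation*}
    \bM := -(\bP\bA+\bA\trans\bP),
\end{equation*}
which is real, symmetric, and strictly positive definite by \eqref{eq:KYP1}. The simplest admissible choice is to take $\bq=\mathbf{0}$, any $c>0$, and $\bL := c^{-1}\bM$, so that $\bL=\bL\trans\succ 0$ and $-\bq\bq\trans - c\bL = -\bM = \bP\bA+\bA\trans\bP$. Combined with \eqref{eq:KYP2} (which, for $\bD=0$, coincides with the second equation of Theorem~\ref{th:LYPlemma}), the quadruple $(\bP,\bq,c,\bL)$ verifies both equations of the Positive Real Lemma. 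By the ``only if'' part of Theorem~\ref{th:LYPlemma}, the transfer function $F(s)$ is SPR.

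There is essentially no hard step here; the only subtlety worth double-checking is that Theorem~\ref{th:LYPlemma} permits $\bq=\mathbf{0}$ as a real vector, which it does since it is stated for arbitrary real $\bq$. The strictness of the LMI in \eqref{eq:KYP1} is precisely what guarantees that the constructed $c\bL$ in the reverse direction is strictly positive definite, matching the hypothesis of the Positive Real Lemma. Hence no extra argument beyond invoking the previous theorem is required.
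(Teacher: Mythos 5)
Your proof is correct and follows essentially the same route as the paper: specialize Theorem~\ref{th:LYPlemma} to $\bD=0$ and observe that the right-hand side $-\bq\bq\trans-c\bL$ of the Lyapunov equation ranges over (a set containing) all negative definite matrices, which is exactly what the paper means by ``relaxing the first equality.'' Your explicit choice $\bq=\mathbf{0}$, $\bL=c^{-1}\bM$ in the reverse direction just makes the paper's one-line remark concrete.
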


\begin{proof}
    This comes directly from \Cref{{th:LYPlemma}} by plugging in $\bD=0$ into the second equality, and also by relaxing the first equality since its right-hand side is an arbitrary negative definite matrix.
\end{proof}

The link between the stability of the \texttt{AAA} model structure~\eqref{eq:AAAReal}, MP, and SPR transfer functions is established by the following theorem, proven in~\cite{ProofMP_ASPR} and reported here for the SISO case.
\begin{theorem}\label{th:ASPR}
Any strictly proper, minimum phase SISO transfer function $F(s)=\bC (s \bI_n - \bA)^{-1} \bB$ with $\bC\bB>0$  can be made SPR via constant output feedback, so that
\begin{equation}\label{eq:ASPR}
    \exists g \in \mathbb{R}:
    G(s)=\frac{F(s)}{1+gF(s)} = \bC(s \bI_n - \bA+g\bB \bC)^{-1}\bB \quad \text{is SPR}
\end{equation}
\end{theorem}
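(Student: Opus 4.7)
The plan is to combine a root-locus argument (for closed-loop stability) with a direct frequency-domain computation (for positivity of the real part on the imaginary axis), and then invoke the standard frequency-domain characterization of SPR rational functions of relative degree one.

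First, by the matrix-determinant lemma the closed-loop matrix $\tilde{\bA}=\bA-g\bB\bC$ satisfies
\begin{equation*}
    \det(s\bI_n-\tilde{\bA})=\det(s\bI_n-\bA)\bigl(1+gF(s)\bigr),
\end{equation*}
so the closed-loop poles are precisely the roots of $1+gF(s)=0$. Writing $F=p/q$ with $\deg q=n$ and $\deg p=n-1$ (the latter because $\bC\bB\neq 0$ forces relative degree one), the closed-loop characteristic polynomial is $q(s)+g\,p(s)$. I would analyze its large-$g$ behavior through continuous dependence of roots on coefficients: $n-1$ of the roots converge to the zeros of $p(s)$, all in the open left half-plane by minimum-phaseness of $F$, while the remaining root drifts along the real axis toward $-g\,\bC\bB$ (as can be read from Vieta's formula for the sum of the roots). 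Since $\bC\bB>0$, taking $g>0$ sufficiently large pushes this runaway root strictly to the left, and so there exists $g_1>0$ with $\tilde{\bA}$ Hurwitz for every $g\geq g_1$.

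Second, I would compute the real part of $G$ on the imaginary axis. With $F(j\omega)=u+jv$, a direct manipulation yields
\begin{equation*}
    \texttt{Re}\bigl(G(j\omega)\bigr)=\frac{u+g\,|F(j\omega)|^2}{|1+gF(j\omega)|^2}.
\end{equation*}
Minimum-phaseness forbids zeros of $F$ on $j\IR$, so $|F(j\omega)|^2>0$ everywhere, and the sign of the numerator equals the sign of $g+\texttt{Re}\bigl(1/F(j\omega)\bigr)$. A partial-fraction expansion $1/F(s)=s/(\bC\bB)+c_0+\sum_{i}r_i/(s-z_i)$ with $z_i$ strictly in the open left half-plane shows that $\omega\mapsto\texttt{Re}\bigl(1/F(j\omega)\bigr)$ is continuous on $\IR$ and has the finite limit $c_0=-\bC\bA\bB/(\bC\bB)^2$ as $|\omega|\to\infty$; it is therefore bounded below, and any $g$ exceeding the threshold $g_2:=-\inf_{\omega\in\IR}\texttt{Re}\bigl(1/F(j\omega)\bigr)$ guarantees $\texttt{Re}(G(j\omega))>0$ on all of $j\IR$.

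Third, I would verify the asymptotic condition at infinity required for relative-degree-one SPR functions: since $G(s)\sim \bC\bB/(s+g\,\bC\bB)$ as $|s|\to\infty$, one obtains $\omega^{2}\,\texttt{Re}(G(j\omega))\to g(\bC\bB)^2>0$ for any $g>0$. Selecting $g>\max(g_1,g_2)$ then delivers Hurwitz stability of $\tilde{\bA}$, strict positivity of $\texttt{Re}(G(j\omega))$ on the entire imaginary axis, and the correct high-frequency behavior simultaneously; combined with the realness of $G(s)$ for real arguments (inherited from $\bA,\bB,\bC,g\in\IR$) this is precisely the classical frequency-domain characterization of SPR for relative-degree-one rational functions, equivalent to Definition~\ref{def:SPRfrequency}, completing the proof. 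The main technical obstacle I expect is making the root-locus step fully rigorous, in particular ruling out that any of the $n-1$ finite closed-loop poles recrosses the imaginary axis before $g$ reaches its final value; this can be settled by a homotopy argument on the roots of $q(s)+g\,p(s)$ exploiting both the Hurwitz location of the limit roots and the monotone drift of the runaway root for large $g$, but the bookkeeping deserves some care.
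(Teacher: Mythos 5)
The paper does not prove this theorem: it is imported verbatim from the ASPR literature (``proven in~\cite{ProofMP_ASPR} and reported here for the SISO case''), so there is no in-paper argument to compare against. Your proposal is a correct, essentially self-contained reconstruction of the classical high-gain proof: the determinant identity reducing closed-loop stability to the root locus of $q(s)+g\,p(s)$, the algebraic identity $\texttt{Re}(G(j\omega))=\bigl(g+\texttt{Re}(1/F(j\omega))\bigr)\,|F(j\omega)|^2/|1+gF(j\omega)|^2$ together with boundedness from below of $\texttt{Re}(1/F(j\omega))$, and the Ioannou--Tao frequency-domain characterization of SPR for relative-degree-one functions (which is indeed equivalent to Definition~\ref{def:SPRfrequency} and should be cited explicitly). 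Reformulating positivity in terms of $1/F$ is a good move, since it remains valid at possible imaginary-axis poles of $F$, which minimum-phaseness does not exclude.

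Two small corrections. First, your asymptotic constant is off: from $F(j\omega)=-j\bC\bB/\omega-\bC\bA\bB/\omega^{2}+O(\omega^{-3})$ one gets $\omega^{2}\texttt{Re}(G(j\omega))\to g(\bC\bB)^{2}-\bC\bA\bB=(\bC\bB)^{2}(g+c_0)$, not $g(\bC\bB)^{2}$; positivity is nonetheless automatic for $g>g_2$ because $g_2=-\inf_\omega\texttt{Re}(1/F(j\omega))\geq -c_0$, so no separate condition at infinity is actually needed. Second, the homotopy concern you raise at the end is a non-issue: you only need the roots of $q+gp$ to lie in the open left half-plane for the single final value of $g$, not along a path in $g$; the asymptotic root-locus statement (the $m-1$ bounded roots converge to the zeros of $p$, the remaining root behaves like $-g\,\bC\bB+O(1)$ by Vieta) already yields a threshold $g_1$ beyond which Hurwitz stability holds for \emph{all} $g\geq g_1$, which is all that is required to intersect with $g>g_2$. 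A final pedantic point: Definition~\ref{def:MinimumPhase} only assumes stabilizability and detectability, so $\deg q$ equals the McMillan degree $m\leq n$ rather than $n$; the hidden modes are stable and invariant under output feedback, and since SPR-ness is a property of the rational function $G(s)$ alone, this does not affect the argument.
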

The above Theorem states that every strictly proper, SISO MP transfer function with $\bC \bB>0$ can generate an SPR transfer function when closed in feedback with a constant output gain. We highlight that the condition $\bC\bB>0$ is necessary to get SPR-ness of the closed-loop state space \mbox{$(\bA-g\bB\bC,\bB,\bC)$}, since in order to fulfill~\eqref{eq:KYP2}, for this system it must hold $\bC\bB=\bB^T\bP\bB$ with $\bP$ positive definite. Transfer functions satisfying the requirements of \Cref{th:ASPR} are referred to in the literature as \emph{Almost Strictly Positive Real} transfer functions, see e.g.~\cite{ProofMP_ASPR}.

\Cref{th:ASPR} can be used to check whether the denominator $D(s)$ in~\eqref{eq:AAAReal} is minimum phase. Thus, it provides a tool to characterize the asymptotic stability of the \texttt{AAA} transfer function $\hat{H}(s)$. The following result comes naturally.
\begin{lemma}\label{stabilityLemma}
    Let 
    \begin{equation}\label{eq:AAAlemma}
        \hat{H}(s) =\frac{\sum_{i=1}^k \left( \frac{h_i w_i}{s -j \lambda_i} + \frac{(h_i w_i)^*}{s +j\lambda_i} \right) } {\sum_{i=1}^k \left(\frac{w_i}{s -j\lambda_i} + \frac{w_i^*}{s +j\lambda_i}\right)} = \frac{N(s)}{D(s)}=\frac{\frac{p(s)}{l(s)}}{\frac{q(s)}{l(s)}} = \frac{p(s)}{q(s)}, \ \ \text{with} \ D(s)= \Cden (s \bI_{2k} - \Aden)^{-1} \Bden,
    \end{equation}
where the realization $(\Aden, \Bden, \Cden)$ is of size $2k$; assume that $w_i\neq 0$ and that $\hat H(s)$ has $2k-1$ poles. Then $\hat{H}(s)$ is asymptotically stable if and only if
    \begin{enumerate}
        \item $\Cden\Bden>0$ and $\exists g\in \mathbb{R}$ such that 
        \begin{equation}
             G^+(s)=
           \frac{D(s)}{1+gD(s)} = \Cden(s \bI_{2k} - \Aden+g\Bden \Cden)^{-1}\Bden
        \end{equation}
        is SPR or, \label{cond:stabplus}
         \item $\Cden\Bden<0$ and $\exists g\in \mathbb{R}$ such that 
        \begin{equation}
             G^-(s)=
           \frac{-D(s)}{1-gD(s)} = -\Cden(s \bI_{2k} - \Aden-g\Bden \Cden)^{-1}\Bden
        \end{equation}
        is SPR. \label{cond:stabminus}
    \end{enumerate}
\end{lemma}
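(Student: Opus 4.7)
The plan is to reduce the claim to the minimum-phase characterization of \Cref{prop:MPStability} combined with the almost-strictly-positive-real equivalence of \Cref{th:ASPR}, handling the two signs of $\Cden\Bden$ by a simple symmetry argument. The first step is to observe that, because the common denominator of the fractions defining $D(s)$ equals $l(s)=\prod_{i=1}^{k}(s-j\lambda_i)(s+j\lambda_i)$, which has degree $2k$, the associated numerator polynomial $q(s)$ has degree at most $2k-1$. The hypothesis that $\hat H(s)$ has exactly $2k-1$ poles forces $\deg q(s)=2k-1$ and rules out any pole--zero cancellation between $p(s)$ and $q(s)$, so these two polynomials are globally coprime. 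The second item of \Cref{prop:MPStability} then yields that $\hat H(s)$ is asymptotically stable if and only if $D(s)$ is minimum phase.

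The second step is to rewrite minimum-phaseness of $D(s)$ via the ASPR characterization of \Cref{th:ASPR}. The size-$2k$ realization $(\Aden,\Bden,\Cden)$ of $D(s)$ is strictly proper, and expanding $sD(s)$ at infinity gives $\Cden\Bden=2\sum_{i=1}^{k}\texttt{Re}(w_i)\in\IR$. If $\Cden\Bden>0$, \Cref{th:ASPR} applies directly to $D(s)$ and shows that $D(s)$ is MP if and only if there exists $g\in\IR$ making $G^+(s)=D(s)/(1+gD(s))=\Cden(s\bI_{2k}-\Aden+g\Bden\Cden)^{-1}\Bden$ SPR, which is exactly condition~\ref{cond:stabplus}. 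If $\Cden\Bden<0$, I would apply the same theorem to $-D(s)$, whose realization is $(\Aden,\Bden,-\Cden)$ with $(-\Cden)\Bden>0$; since multiplication by $-1$ does not alter the zero set, $-D(s)$ is MP if and only if $D(s)$ is, and the ASPR condition for $-D(s)$ translates into $-D(s)/(1+g(-D(s)))=G^-(s)$ being SPR, i.e.\ condition~\ref{cond:stabminus}.

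The main obstacle is verifying the structural hypotheses demanded by \Cref{th:ASPR}, namely that the chosen size-$2k$ realization of $D(s)$ is stabilizable, detectable, and minimal enough for the ASPR equivalence to apply. These properties can be established from the coprimeness of $q(s)$ and $l(s)$, which holds automatically in the present setting: under the assumption $w_i\neq 0$ and provided the $\lambda_i$ are distinct and nonzero, a direct residue computation gives $q(j\lambda_i)\neq 0$ at each pole of $l(s)$, so $D(s)$ has McMillan degree exactly $2k$ and the realization is minimal. A minor caveat is that the lemma implicitly excludes the degenerate case $\Cden\Bden=0$, i.e.\ $\sum_i\texttt{Re}(w_i)=0$, where neither $D(s)$ nor $-D(s)$ satisfies the positivity requirement of \Cref{th:ASPR}; in the context of the \texttt{AAA} iteration this configuration is non-generic and can be excluded a priori or resolved by a small perturbation of the barycentric weights.
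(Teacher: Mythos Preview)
Your approach mirrors the paper's proof almost step for step: reduce stability of $\hat H(s)$ to the minimum-phase property of $D(s)$ via \Cref{prop:MPStability}, then invoke \Cref{th:ASPR}, handling the sign of $\Cden\Bden$ by passing to $-D(s)$. Two points deserve tightening.

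First, your ``minor caveat'' about $\Cden\Bden=0$ is unnecessary: you already established that the hypothesis forces $\deg q(s)=2k-1$, and the leading coefficient of $q(s)$ is precisely $\sum_{i=1}^k(w_i+w_i^*)=2\sum_i\texttt{Re}(w_i)=\Cden\Bden$. Hence $\Cden\Bden\neq 0$ is a consequence of the assumption that $\hat H(s)$ has $2k-1$ poles, not an extra genericity restriction. The paper makes this connection explicit via the Gilbert realization.

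Second, and more substantively, you invoke \Cref{th:ASPR} as a biconditional (``$D(s)$ is MP if and only if\ldots''), but as stated in the paper that theorem only gives the forward implication MP $\Rightarrow$ ASPR. The converse---that SPR-ness of $G^+(s)$ for some $g$ forces $D(s)$ to be MP---requires a short separate argument, which the paper supplies: one checks that constant output feedback cannot create pole--zero cancellations (if $D(s_z)=0$ then $1+gD(s_z)=1\neq 0$), so the zeros of $G^+(s)$ coincide with those of $D(s)$; since any SPR function is MP, the zeros of $D(s)$ lie in the open left half-plane. You should state this explicitly rather than fold it into the citation of \Cref{th:ASPR}.
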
 
\begin{proof}
With reference to the definition
\begin{equation}
        D(s)=\sum_{i=1}^k \left(\frac{w_i}{s -j\lambda_i} + \frac{w_i^*}{s +j\lambda_i}\right)=\frac{q(s)}{l(s)}=\frac{\sum_{m=0}^{2k-1} a_m s^m}{\sum_{n=0}^{2k} b_n s^n}, 
\end{equation}
we recall that when $\hat H(s)$ has $2k-1$ poles, these poles coincide with the roots of the polynomial $q(s)$. Hence this polynomial must have $2k-1$ roots, implying $a_{2k-1}\neq 0$. As proved in~\cite{NST18}, when $w_i\neq 0$, $\hat H(s)$ has no poles at $\pm j\lambda_i$; hence $q(s)$ and $l(s)$ are coprime and any realization of $D(s)$ of size $2k$ is minimal. Noticing that $a_{2k-1}=\sum_{i=1}^k(w_i+w_i^*)$ is the sum of the residues of $D(s)$ and that the quantity $\Cden\Bden$ is realization invariant, we see that the equality $a_{2k-1}=\Cden\Bden$ holds for every minimal realization since it holds for the Gilbert realization~\cite{ACA05}. Hence, when $\hat H(s)$ has $2k-1$ poles, it must hold that $\Cden\Bden\neq 0$.

Now, when $\hat H(s)$ has $2k-1$ poles, $p(s)$ and $q(s)$ are coprime, and, according to Proposition~\ref{prop:MPStability}, $\hat H(s)$ is asymptotically stable if and only if $D(s)$ is MP.

Consider the case in which $\Cden\Bden>0$; due to \Cref{th:ASPR}, $D(s)$ is MP only if there exists $g$ such that $G^+(s)$ is SPR, so Condition~\ref{cond:stabplus} is necessary for the stability of $\hat H(s)$. To prove sufficiency, notice that the constant output feedback cannot induce pole-zero cancellations in $G^+(s)$ for $g\in \mathbb{R}$. In fact, such cancellation would occur if both the conditions $G(s_z)=0$ and $1+gG(s_z)=0$ were verified concurrently, which is impossible. Hence, the zeros of $G^+(s)$ coincide with the zeros of $D(s)$. If $G^+(s)$ is SPR, then these zeros have strictly negative real parts, $D(s)$ is MP and $\hat{H}(s)$ is asymptotically stable. 

The same arguments can be applied to prove the statement of the theorem when $\Cden\Bden<0$, by simply considering that $D(s)$ is MP if and only if $-D(s)$ is MP.
\end{proof}

Lemma~\ref{stabilityLemma} establishes an explicit algebraic link between the state space realization of the denominator function $D(s)$ and the stability of the \texttt{AAA} model $\hat{H}(s)$. In the following section, we show how to exploit this link in order to constrain the \texttt{AAA} algorithm to generate structurally stable models.
Figure~\ref{fig:block-diagram} provides a graphical illustration of the relations between the various properties that were discussed above.

\begin{figure}
\centering
    \begin{tikzpicture}
 
\draw (-3.8,-0.5) rectangle ++(10.5,5.5);
  
  \node at (1.5,4.5) {$F(s)=\bC(s\bI_{n}-\bA)^{-1}\bB$, with $(\bA,\bB,\bC)$ minimal and $\bC\bB>0$};

  \draw[red](0.75, 1.5) ellipse (2.7 and 1.7); 

  \node at (0.75,0.3) {{\color{red}$\textrm{MP}\equiv\textrm{ASPR}$}};

  \draw[blue] (1.5, 2.5) ellipse (1.5 and 1.2);

   \node at (2.2,3.2) {{\color{blue}$\textrm{PR}$}};

    \draw[orange] (1.25, 2.15) ellipse (1 and 0.5);

   \node at (1.25, 2.1) {{\color{orange}$\textrm{SPR}$}};
\end{tikzpicture}
    \caption{Graphical illustration of the various relations between Minimum-Phase (MP), Almost Strictly Positive Real (ASPR), Positive Real (PR), and Strictly Positive Real (SPR) functions. The intersection between PR and the set of Not-MP contains PR functions having zeros on $j\mathbb{R}$.}
    \label{fig:block-diagram}
\end{figure}
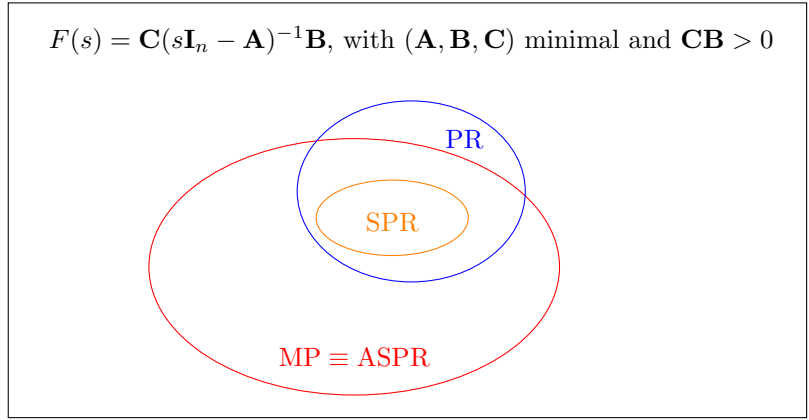

\subsection{Exact stability enforcement}
Consider the denominator function $D(s)$ in~\eqref{eq:AAAlemma}
\begin{equation}
    D(s)=\sum_{i=1}^k \left(\frac{w_i}{s -j\lambda_i} + \frac{w_i^*}{s +j\lambda_i}\right) = \Cden (s \bI_{2k} - \Aden)^{-1} \Bden,
\end{equation}
with $w_i\neq0$ and $j\lambda_i\in j\mathbb{R}^+$. We define the following minimal state space realization for $D(s)$
\begin{equation}
    \Aden=\textrm{blkdiag}[\Aden_1,\hdots,\Aden_k],\quad \Bden=[\Bden_1,\hdots, \Bden_k]^T, \quad \Cden=[ \alpha_1,\beta_1,\hdots ,\alpha_k,\beta_k ],
\end{equation}
with
\begin{equation}
    \Aden_i=\begin{bmatrix}
        0&\lambda_i\\-\lambda_i&0
    \end{bmatrix}, \quad
    \Bden_i=\begin{bmatrix}
        2&0
    \end{bmatrix}, \quad w_i=\alpha_i+j\beta_i, \quad \forall i=1,\hdots, k.
\end{equation}

We focus on the optimization problem~\eqref{eq:LoewnerMinimizationReal} solved at each iteration of the real-valued \texttt{AAA} algorithm. For readability, we take into account a single iteration $\ell=k\geq 1$ and we drop the iteration index $\ell$, rewriting the problem as
\begin{equation}\label{eq:OptStep0}
      \bx_\textrm{opt}= \arg \min_{|| \bx||_2=1}  ||\IL_{\textrm{real}} \bx||_2=\arg \min_{\Vert \Cden||_2=1}  ||\IL_{\textrm{real}} \Cden^T\Vert^2_2,
\end{equation}
exploiting the fact that $\Cden=[ \alpha_1,\beta_1,\hdots ,\alpha_k,\beta_k ]=\bx^T$.  The problem can be equivalently rewritten in terms of $\Cden$ dropping the non-convex norm constraint as
\begin{equation}\label{eq:OptStep1}
     \Cden_\textrm{opt}= \arg\min_{\Cden}\Vert\IL_{\textrm{real}} (\Cden^T-\bx_\textrm{opt})\Vert^2_2,
\end{equation}
where we assume that the ambiguity on the sign of $\bx_\textrm{opt}$ is removed by requiring $\bx_\textrm{opt}^T\Bden>0$.
When an asymptotically stable approximant $\hat H(s)$ with $2k-1$ poles is required, Lemma~\ref{stabilityLemma} states that the \texttt{AAA} model obtained by solving~\eqref{eq:OptStep1} is asymptotically stable if and only if the resulting denominator function $D(s)$ can generate a SPR transfer function under constant output feedback. Therefore, we introduce the required constraints into problem~\eqref{eq:OptStep1}, in order to guarantee that any of its feasible solutions will lead to a model with the desired stability property.

First, we observe that the length of the vector defining the cost function of~\eqref{eq:OptStep1} can be reduced by performing the state space transformation
\begin{equation}
     D(s)\Leftrightarrow (\tilde \Aden,\tilde \Bden,\tilde \Cden)=(\bT^{-1}\Aden \bT,\bT^{-1}\Bden,\Cden \bT), \quad \IL_{\textrm{real}}=\bU\bSigma\bV^T, \quad \bT=\bV\bSigma,
\end{equation}
where $\IL_{\textrm{real}}=\bU\bSigma\bV^T$ is the ``thin'' SVD of $\IL_{\textrm{real}}$, which is assumed to be full column rank. Notice that this condition is practically verified in any realistic model order reduction process. Using this state space realization, the cost function of~\eqref{eq:OptStep1} is rewritten in terms of $\tilde{\Cden}=\Cden\bT$ as
\begin{equation}
   \Vert\IL_{\textrm{real}} (\Cden^T-\bx_\textrm{opt})\Vert^2_2= \Vert\bSigma\bV^T(\Cden^T-\bx_\textrm{opt})\Vert_2^2= 
   \Vert \tilde{\Cden}^T-\bar{\bx}\Vert_2^2,
\end{equation}
where $\bar{\bx}=\bSigma \bV^T \bx_\textrm{opt}$, leading to the equivalent problem
\begin{align}\label{eq:OptStep2}
    \min_{\Cden, \tilde \Cden}&=\Vert \tilde{\Cden}^T-\bar{\bx}\Vert_2^2 \\
    &\text{subject to}: \nonumber\\
      &\tilde{\Cden}=\Cden\bT \nonumber.
\end{align}
Notice that the vector defining the cost function of~\eqref{eq:OptStep2} has size $2k$, while the vector entering~\eqref{eq:OptStep1} has size $2(V-k)$. Since the number of data points $V$ is much larger than the number of unknowns $2k$ in every practical application, the variable transformation substantially reduces the size of the problem.

To enforce model stability, we restrict the feasible set of~\eqref{eq:OptStep2} to the set of vectors $\tilde{\Cden}$ for which Condition~\eqref{cond:stabplus} of Lemma~\ref{stabilityLemma} can be verified, and we consider feasible any vector $\tilde{\Cden}$ such that the function $G^+(s)\Leftrightarrow(\tilde{\Aden}-g\tilde{\Bden} \tilde{\Cden},\tilde{\Bden},\tilde{\Cden})$ fulfills the SPR conditions~\eqref{eq:KYP1},~\eqref{eq:KYP2} with suitable $g\in \mathbb{R}$. Thus, according to~\eqref{eq:ASPR}, the constrained optimization problem thus reads
\begin{align}
 \min_{\Cden,\tilde \Cden,\bQ,g} &\Vert \tilde{\Cden}^T-\bar{\bx}\Vert_2^2\label{eq:OptStep3}\\
    &\text{subject to:} \nonumber \\
    &\bQ=\bQ^T\succ 0,\\ 
    & \tilde{\Aden}^T\bQ+\bQ\tilde{\Aden}-g(\tilde{\Bden} \tilde{\Cden})^T\bQ-g\bQ\tilde{\Bden} \tilde{\Cden}\prec0,  \nonumber \\&\bQ\tilde{\Bden}=\tilde{\Cden}^T, \quad \tilde{\Cden}=\Cden\bT. \nonumber
\end{align}
Since the constraints involved in~\eqref{eq:OptStep3} are necessary and sufficient to verify the stability of model structure~\eqref{eq:AAAReal} when it is assumed to have $2k-1$ poles, the optimal solution $\tilde{\Cden}_\textrm{opt}^T=\bx_\textrm{opt}$ is feasible whenever the unconstrained \texttt{AAA} solution $\bx_\textrm{opt}$ leads to an asymptotically stable model. When this is not the case, the stability constraints are active and the optimal solution $\tilde{\Cden}^T_\textrm{opt}$ will not coincide with $\bar{\bx}$.
Eliminating the equality constraints from~\eqref{eq:OptStep3}, the problem is equivalently rewritten as 
\begin{align}
    \min_{\bQ,g}&\Vert \bQ\tilde{\Bden}-\bar{\bx}\Vert_2^2\label{eq:OptStep4} \\
    &\text{subject to:} \nonumber\\
     &\bQ=\bQ^T\succ 0, \nonumber\\ 
    & \tilde \Aden^T\bQ+\bQ \tilde \Aden-2g \bQ\tilde \Bden \tilde \Bden^T \bQ\prec0 \nonumber.
\end{align}

\subsection{Stability enforcement based on convex optimization}

Unfortunately, the optimization problem~\eqref{eq:OptStep4}  is non-convex, since it requires the fulfillment of a nonlinear matrix inequality in variables $\bQ$ and $g$. Although a number of methods based on convex relaxations have been proposed to tackle this kind of problems (see e.g.~\cite{convexOverbounding}), none of them guarantees the recovery of the global solution. On the other hand, treating the problem as is, employing for example global optimization techniques, would be practically feasible only in case very few optimization variables are involved. In the following, we show that problem~\eqref{eq:OptStep4} can be relaxed to a convex problem that preserves the exactness of the model stability constraints, at the price of a slight modification of the target cost function.

\subsubsection{Problem relaxation}

To obtain a convex relaxation of~\eqref{eq:OptStep4}, we start by noticing that the non-linear matrix inequality involved in the problem can be turned into a linear one by applying a congruence transformation. Defining $\bY=\bY^T= \bQ^{-1}$ we have
\begin{equation}
    \begin{aligned}
    \tilde \Aden^T\bQ+\bQ \tilde \Aden-2g \bQ\tilde \Bden \tilde \Bden^T \bQ\prec0 \quad &\iff \quad\bY(\tilde \Aden^T\bQ+\bQ \tilde \Aden-2g \bQ\tilde \Bden \tilde \Bden^T \bQ)\bY \prec 0.\\
    & \iff \quad \bY\tilde \Aden^T+\tilde \Aden \bY-2g\tilde \Bden \tilde \Bden^T\prec 0.
    \end{aligned}
\end{equation}
Second, recalling that scaling the objective function with an arbitrary positive constant $\eta>0$ does not change the minimizer of the problem, we rewrite~\eqref{eq:OptStep4} equivalently as
\begin{align}
    \min_{\bQ,\bY,g} & \: \eta \Vert \bQ\tilde{\Bden}-\bar{\bx}\Vert_2^2\label{eq:OptStep5} \\
    &\text{subject to:} \nonumber \\
     &\bQ=\bQ^T \succ0, \nonumber\\ 
    & \bY\tilde \Aden^T+\tilde \Aden \bY-2g\tilde \Bden \tilde \Bden^T\prec0, \nonumber\\
    &\bY=\bQ^{-1} \nonumber.
\end{align}
Although the matrix inequalities that guarantee the model stability are linear in the unknown matrices $\bY$,$\bQ$, the problem is still non-convex due to the presence of the constraint $\bY=\bQ^{-1}$. Applying an inverse Schur complement, the problem is turned into epigraph form as
\begin{align}
    \min_{\bQ,\bY,g,r} & \:r\label{eq:OptStep6} \\
    &\text{subject to:} \nonumber \\
    &\bQ=\bQ^T \succ0,\nonumber\\ 
    & \bY\tilde \Aden^T+\tilde \Aden \bY-2g\tilde \Bden \tilde \Bden^T\prec0, \nonumber\\
    &\bY=\bQ^{-1}, \nonumber\\
    &\begin{bmatrix}
         r & (\bQ\tilde{\Bden}-\bar{\bx})^T \\ \bQ\tilde{\Bden}-\bar{\bx} & \frac{1}{\eta}\bI_{2k}
     \end{bmatrix} \succeq 0 \label{eq:schur2Breplaced}.
\end{align}
The last matrix inequality can be equivalently restated in terms of $\bY$ by applying the following congruence transformation
\begin{equation}\label{eq:congruence}
    \begin{bmatrix}
        1&\bold{0}^T\\\bold{0}&\bY
    \end{bmatrix}
    \begin{bmatrix}
         r & (\bQ\tilde{\Bden}-\bar{\bx})^T \\ \bQ\tilde{\Bden}-\bar{\bx} & \frac{1}{\eta}\bI_{2k}
     \end{bmatrix}
     \begin{bmatrix}
        1&\bold{0}^T\\\bold{0}&\bY \end{bmatrix}=\begin{bmatrix}
         r & (\tilde{\Bden}-\bY\bar{\bx})^T \\ \tilde{\Bden}-\bY\bar{\bx} & \frac{1}{\eta}\bY\bY\succeq0  
     \end{bmatrix}\succeq0.
\end{equation}
Replacing~\eqref{eq:schur2Breplaced} with~\eqref{eq:congruence} and introducing the additional constraint $\bY\succ 0$ allows to eliminate the variable $\bQ$. We then proceed by performing a convex relaxation of~\eqref{eq:congruence} relying on the following property, known as Young's relation~\cite{caverly2019lmi}
\begin{equation}
    \bR,\bW \in \mathbb{R}^{n\times m}, \bS=\bS^T\succ0 \in \mathbb{R}^{n\times n}\quad \implies \quad \bR^T\bW + \bW^T\bR \preceq \bR^T \bS^{-1}\bR+\bW^T\bS\bW,
\end{equation}
which, setting $\bR=\bY$, $\bW=\bI_{2k}$ and $\bS=\eta \bI_{2k}$ leads to
\begin{equation}\label{eq:Young}
    \frac{1}{\eta}\bY \bY \succeq 2\bY - \bI_{2k} \eta.
\end{equation}
Due to~\eqref{eq:Young} and~\eqref{eq:congruence}, any feasible solution of the following problem
\begin{align}
    \min_{\bY,g,r} & \:r\label{eq:OptStep7} \\
    &\text{subject to:} \nonumber \\
    &\bY \succ 0, \nonumber\\ 
    & \bY\tilde \Aden^T+\tilde \Aden \bY-2g\tilde \Bden \tilde \Bden^T\prec0, \nonumber\\
   & \begin{bmatrix}
         r & (\tilde{\Bden}-\bY\bar{\bx})^T \\ \tilde{\Bden}-\bY\bar{\bx} & 2\bY- \bI_{2k} \eta
     \end{bmatrix} \succeq 0. \nonumber
\end{align}
is a feasible solution of~\eqref{eq:OptStep5}, and thus of~\eqref{eq:OptStep4}, with $\bQ=\bY^{-1}$.
Finally, since we assumed that $\eta$ is an arbitrary positive constant, the following implication
\begin{equation}
    \bY \succ 0, \quad \begin{bmatrix}
         r &  (\tilde{\Bden}-\bY\bar{\bx})^T \\  \tilde{\Bden}-\bY\bar{\bx} & \bY
     \end{bmatrix} \succeq 0 \quad \implies \quad \begin{bmatrix}
         r &  (\tilde{\Bden}-\bY\bar{\bx})^T \\  \tilde{\Bden}-\bY\bar{\bx}& 2\bY- \bI_{2k} \eta
     \end{bmatrix} \succeq 0
\end{equation}
always holds for sufficiently small $\eta$. Thus, we obtain the following relaxation of~\eqref{eq:OptStep4}
\begin{align}
    \min_{\bY,g,r} & \:r\label{eq:OptStep8} \\
    &\text{subject to:} \nonumber \\
    &\bY \succ 0,\nonumber\\ 
    & \bY\tilde \Aden^T+\tilde \Aden \bY-2g\tilde \Bden \tilde \Bden^T\prec0, \nonumber\\
   & \begin{bmatrix}
         r & (\tilde{\Bden}-\bY\bar{\bx})^T \\ \tilde{\Bden}-\bY\bar{\bx} & \bY
     \end{bmatrix} \succeq 0. \nonumber
\end{align}
which is convex in the decision variables and can be solved via standard convex optimization methods. Once its solution is retrieved, the optimal coefficient vector $\Cden$ which defines the \texttt{AAA} model can be recovered exploiting the relations $\bY=\bQ^{-1}$, $\bQ\tilde\Bden=\tilde \Cden^T$, $\tilde \Cden = \Cden \bT$. The corresponding solution will guarantee the asymptotic stability of the resulting ROM.
 
\subsubsection{The cost function of the relaxed convex problem}\label{sec:costFunc}

The proposed relaxation scheme leads to a problem that is equivalent to~\eqref{eq:OptStep4} up to a modification of its cost function. This can be seen by taking the Schur complement of the last matrix inequality of~\eqref{eq:OptStep8}, obtaining
\begin{equation}
    r\geq (\tilde{\Bden}-\bY\bar{\bx})^T \bQ (\tilde{\Bden}-\bY\bar{\bx})=\Vert \bQ^{\frac{1}{2}}\tilde{\Bden} -  \bQ^{-\frac{1}{2}}\bar{\bx}\Vert_2^2 = \Vert\bQ^{-\frac{1}{2}} (\bQ\tilde{\Bden}-\bar{\bx})\Vert_2^2,
\end{equation}
which implies that the epigraph formulation of~\eqref{eq:OptStep8} minimizes the cost $\Vert\bQ^{-\frac{1}{2}} (\bQ\tilde{\Bden}-\bar{\bx})\Vert_2^2$. We conclude that the problem solved by the proposed relaxation is equivalent to the following 
\begin{align}\label{eq:relaxedWithQ}
    \min_{\bQ,g}&\Vert \bQ^{-\frac{1}{2}} (\bQ\tilde{\Bden}-\bar{\bx})\Vert_2^2 \\
    &\text{subject to:} \nonumber\\
     &\bQ=\bQ^T\succ 0, \nonumber\\ 
    & \tilde \Aden^T\bQ+\bQ \tilde \Aden-2g \bQ\tilde \Bden \tilde \Bden^T \bQ\prec0 \nonumber,
\end{align}
since every feasible solution $\bY$ of~\eqref{eq:OptStep8} returns a matrix $\bQ=\bY^{-1}$ that fulfils the constraints of~\eqref{eq:relaxedWithQ}, and the two problems minimize the same cost function.

Comparing the exact non-convex stability enforcement problem \eqref{eq:OptStep4} with~\eqref{eq:relaxedWithQ}, we see that the two differ only by the factor $\bQ^{-\frac{1}{2}}$ entering the latter cost function. We highlight in first place that, in both cases, when the constraints are not active, the solution of the problems is the same matrix $\bQ_\textrm{opt}$ such that $\bQ_\textrm{opt}\Bden=\bar{\bx}$, which in turn implies $\Cden_\textrm{opt}^{T}=\bx_\textrm{opt}$ in terms of the model optimization variables. Therefore, in case the unconstrained optimal solution $\bx_\textrm{opt}$ should lead to an asymptotically stable model, such solution would be recovered by solving the proposed relaxed problem~\eqref{eq:OptStep8}. 

On the other hand, when the stability constraints are active, then the weighting factor $\bQ^{-\frac{1}{2}}$ in the cost function of~\eqref{eq:relaxedWithQ} will effectively modify the problem solution. However, as will be shown in the following, neither the reference cost function of~\eqref{eq:OptStep4}, nor the cost function of the unconstrained \texttt{AAA} algorithm, represent exactly the $2$-norm of the residual fitting error of the model $\hat{H}(s)$ with respect to the available sample points. Therefore, a more careful assessment of the role of the weighting factor introduced by~\eqref{eq:relaxedWithQ} is in order.

Let us consider that the \texttt{AAA} algorithm performs rational approximation by interpolating the available data at given support points, and solving a least-squares problem to minimize the deviation of $\hat H(s)$ against the remaining data samples. The optimization stage would formally require the solution of the following optimization problem (dropping any iteration index)
\begin{equation}\label{eq:problemExact}
    \bw_\textrm{opt}=\arg\min_{\norm{\bw}_2=1} \Vert \bfe \Vert_2, \quad \bfe=[\hat H(s_1)-h_1,\hdots, \hat H(s_L)-h_L]^T,  \quad L=\textrm{card}(\Gamma),
\end{equation}
which is however non-convex in the model coefficients and known to exhibit poor local solutions. Therefore, instead of solving~\eqref{eq:problemExact}, the algorithm enforces the linearized approximation condition~\eqref{eqn:LSfit_Loew1} in least-squares sense, solving problem~\eqref{eq:LoewnerMinimization}. When written in terms of the exact error vector $\bfe$, this problem reads
\begin{equation}\label{eq:linearizedApproxOpt}
    \bw_\textrm{opt}=\arg\min_{\norm{\bw}_2=1} \Vert \IL \bw\Vert_2 = \arg\min_{\norm{\bw}_2=1} \Vert \bDelta \bfe \Vert_2=\arg\min_{\norm{\bw}_2=1} \bfe^{\star}\bDelta^{\star}\bDelta\bfe \quad \bDelta=\textrm{diag}[D(s_1),\hdots, D(s_L)].
\end{equation}
The above shows that the linearization process introduces a weight (or bias) in the desired cost function which depends on the squared absolute value of the unknown denominator $D(s)$, evaluated over $s\in\Gamma$. Before solving for $\bw$, this weighting factor is unknown. The removal of such bias is the main idea behind a number of iterative algorithms such as the Vector Fitting~\cite{VF} and the similar Sanathanan-Koerner~\cite{SK} iteration. For all of them, the core machinery is the solution of a least-squares problem of the kind of~\eqref{eq:linearizedApproxOpt} with suitable modifications.

When the real-valued \texttt{AAA} structure is enforced, as in model structure~\eqref{eq:AAAReal}, and computations are made with real algebra, problem~\eqref{eq:LoewnerMinimization} is replaced by~\eqref{eq:LoewnerMinimizationReal}, that can be written as
\begin{equation}
    \bx_{\textrm{opt},1}= \arg \min_{\norm{\bx}_2=1} \Vert\IL_{\textrm{real}} \bx\Vert_2=\arg \min_{\norm{\bx}_2=1} \norm{\begin{bmatrix} \texttt{Re}(\bDelta\bfe)\\ \texttt{Im}(\bDelta\bfe)\end{bmatrix}}^2_2,\quad \IL_\textrm{real}\bx_{\textrm{opt},1}=\bfe_1^\bw,
\end{equation}
where $\bfe_1^\bw$ is the optimal linearized error vector resulting from the problem solution. The cost function of~\eqref{eq:OptStep4}, can be rewritten using the above notation as
\begin{equation}
    \Vert \IL_{\textrm{real}} \Cden^T - \bfe_1^\bw\Vert_2^2=\Vert \bfe_2^\bw - \bfe_1^\bw\Vert_2^2= (\bfe_2^\bw - \bfe_1^\bw)^T(\bfe_2^\bw - \bfe_1^\bw), \quad \IL_{\textrm{real}} \Cden^T=\bfe_2^\bw 
\end{equation}
being $\bfe_2^\bw$ the linearized residual error vector of the \texttt{AAA} model defined by the coefficients $\Cden$. The cost function that is minimized while solving~\eqref{eq:OptStep8} introduces a weighting factor in the optimization. Such cost function can be written in terms of $(\bfe_2^\bw - \bfe_1^\bw)$ as
\begin{equation}
    (\bfe_2^\bw - \bfe_1^\bw)^T\bU \bY \bU^T(\bfe_2^\bw - \bfe_1^\bw), \quad \bY=\bQ^{-1},\quad \IL_{\textrm{real}}=\bU \bSigma \bV^T,
\end{equation}
and is thus observed to weight the deviation between the residual error of the two models through matrix $\bU \bY \bU^T$. Since such deviation does not have a straightforward interpretation in terms of the actual residual fitting error of the model defined by $\Cden$, it is difficult to state the effect of this weighting on the overall modeling performance of the algorithm. We can state that solving both the original \texttt{AAA} optimization~\eqref{eq:OptStep0} or the proposed constrained relaxation~\eqref{eq:OptStep8} leads to the minimization of a weighted approximation error. In both cases, the introduced weighting factor is unknown before obtaining the weighted optimal solution. This makes it impossible to state theoretically which of the two approaches will return the model with the smallest residual error $\bfe$. From the practical standpoint, the effectiveness of the proposed convex stability enforcement scheme is confirmed by the numerical experiments reported in Sec.~\ref{sec:num}.

\subsection{A real-valued \texttt{AAA} algorithm with guaranteed stability}

In this section, we suggest how to embed the convex program~\eqref{eq:OptStep8} into the real-valued \texttt{AAA} algorithm in order to generate structurally stable reduced-order transfer functions. In principle, this could be done by simply modifying Step $4$ in Algorithm~\ref{al:aaa}, by solving~\eqref{eq:OptStep8} for the optimal matrix $\bY_\textrm{opt}$ in place of~\eqref{eq:LoewnerMinimizationReal}, and recovering the required coefficient vector $\bx_\textrm{opt}^{(\ell)}$ via prescribed variable transformations. However, the resulting  algorithm at each iteration
should solve a semi-definite program in place of simply computing the SVD of $\IL_{\textrm{real}}^{(\ell)}$, while repeatedly introducing an additional weighting factor in the standard \texttt{AAA} cost function, due to the proposed convex relaxation approach.
For these reasons, we found that a more efficient and effective approach is to solve~\eqref{eq:OptStep8} only once after the last iteration, and only if the model returned by Algorithm~\ref{al:aaa} is found to be unstable. In case the resulting stable model does not meet the required maximum error tolerance, we simply perform more \texttt{AAA} iterations and increase the number of poles of $\hat H(s)$. A pseudocode for the proposed approach, which we call stable \texttt{AAA}, or \texttt{stabAAA}, is given in Algorithm~\ref{al:aaa2}.

\subsubsection{Can we always attain the desired accuracy?}

We highlight that there are at least two relevant scenarios of practical interest in which a stable model $\hat H (s)$ fulfilling the condition $\max_{j\lambda_v\in \Gamma}\vert h_v-\hat H(j\lambda_v) \vert < \epsilon$ for arbitrarily small $\epsilon$ may not exist. These scenarios occur when
\begin{enumerate}
\item The target transfer function $ H(s)$ exhibits one or more unstable poles or, in general, unstable dynamics. Since a closed-form expression for $ H(s)$ may be not available, it is not generally possible to infer from the available data if such a scenario occurs. If it does, a constrained stable rational approximation $\hat  H(s)$ for $ H(s)$ which satisfies a maximum error bound with arbitrarily small threshold $\epsilon$ may not exist.
\item The data samples may include noise and/or some non-causal components, both of which are incompatible with an arbitrarily accurate rational approximation with constrained stability. Such spurious components can come from direct measurement or even from the discretization error of Partial Differential Equations providing a first-principle physical description of the underlying dynamics. Also, models of material properties that are not causal (e.g., as fitted from measurements using non-causal models). These difficulties are well-known and have been extensively studied~\cite{triverio2007stability}. Detection of non-causal data components is particularly challenging since any finite set of data samples is compatible with a causal model (e.g. by computing a rational interpolation with order equal to the number of data points). However, severe overfitting occurs when enforcing model causality hence stability when interpolating or approximating non-causal data. Efficient and reliable algorithms exist to detect such situation~\cite{jnl-2008-tadvp-dispersionrelations,6680676}.
\end{enumerate}
In the above scenarios, the approximation error of a constrained stable rational model saturates to a minimum allowed level even when arbitrarily increasing model order~\cite{triverio2007stability}. In this work, we assume that the error threshold imposed as a stopping criterion for model order selection is larger than the amount of non-causal components in the data.

\begin{algorithm}[tb]
	\caption{\texttt{stabAAA}:  \texttt{AAA} algorithm with stability enforcement.}
	\label{al:aaa2}
	\begin{algorithmic}[1]
		\Require A (discrete) set of sample points $j\lambda_v, v=1,\hdots V \in \Gamma \subset j\mathbb{R}^+$ and corresponding target transfer function values, $h_v= H(j\lambda_v)$. An error tolerance $\epsilon>0$. Tolerance decreasing factor $0<\theta<1$. A maximum iteration number $M_{max}$.
		\Ensure A stable rational approximant $\hat  H(s)$ of order $(2k,2k)$ displayed in a barycentric form.
		\State Initialize $\ell=1$, $\Gamma^{(0)} \gets \Gamma$, $\hat H_{0}(s) \gets V^{-1}\sum_{v=1}^{V} h_v$, $\epsilon_M=\epsilon$, $M=1$.
         \State Execute the \textbf{while} cycle of Algorithm~\ref{al:aaa} with error tolerance $\epsilon_M$. \label{step:while}
         \State Retrieve $\hat  H_{\ell-1}(s)$ and $\IL_{\textrm{real}}^{(\ell-1)}$ 
  \If{$\hat  H_{\ell-1}(s)$ is stable}
  \State return $\hat  H_{\ell-1}(s)$
  \Else
  \State Solve the optimization problem~\eqref{eq:OptStep8}, using $\IL_{\textrm{real}}^{(\ell-1)}$ and retrieve the optimal vector $\Cden_\textrm{opt}$
   \State $w_i^{(\ell-1)}\gets\alpha_{i,\textrm{opt}}^{(\ell-1)}+j\beta_{i,\textrm{opt}}^{(\ell-1)}$
   \If {$\max_{j\lambda \in \Gamma^{(\ell-1)}} |H(j\lambda)-\hat{H}_{\ell-1}(j\lambda)| < \epsilon$ \textbf{or} $M > M_{max}$} 
  \State return $\hat  H_{\ell-1}(s)$
  \Else
  \State $\epsilon_M \gets \theta\epsilon_M$.
  \State $M\gets M+1$
  \State Go to step~\ref{step:while}
  \EndIf
  \EndIf
	\end{algorithmic}
\end{algorithm}

\section{Numerical results and applications}\label{sec:num}

In this section, we apply \texttt{stabAAA} for generating asymptotically stable ROMs of large-scale systems starting from a set of frequency domain measurements. 

We select three test cases coming from electrical, mechanical, and acoustical domains. For each of them, Algorithm~\ref{al:aaa} (\texttt{AAA}) is first applied with an admissible approximation error $\epsilon$ such that the final model is detected as unstable. Then, using the same error threshold, we apply the proposed approach to generate a stable model and compare the results in terms of the accuracy of the final approximation. Once our stable model is available, we compare the proposed approach with the Vector Fitting iteration, with the recent approach~\cite{osti2005602} (\texttt{smiAAA}), and with the method presented in~\cite{valera2021aaa} (\texttt{AAA}$^2$); in this latter approach the unstable poles of an unconstrained \texttt{AAA} model are simply discarded and the model residues are optimized with respect to the remaining stable poles.  For fairness of comparison, we apply \texttt{VF} and \texttt{smiAAA} by imposing the same number of poles of the model obtained with the proposed approach. For \texttt{AAA}$^2$, the order is as obtained with Algorithm~\ref{al:aaa} less the number of unstable modes that are truncated by the algorithm.

Before applying any of the considered modeling approaches, each data set is normalized as follows
\begin{equation}
    f_{Max} = \max_{v=1,\hdots, V} \lvert \lambda_v/2\pi \lvert, \quad h_{Max}= \max_{v=1,\hdots, V} \lvert h_v \lvert, \quad h_v\leftarrow h_v/h_{Max}, \quad \lambda_v\leftarrow \lambda_v/f_{Max}, \quad v=1,\hdots, V.
\end{equation}
This allows standardization of the selected admissible approximation error $\epsilon$ irrespectively of the test case at hand.
For each test case, denoting with $X(s)$ any of the computed reduced-order transfer functions, we compute three error metrics
\begin{equation}   E_\infty^X=\max_{v=1,\hdots V} \lvert {X}(j\lambda_v)-h_v\lvert, \quad E_2^ X= \sqrt{\sum_{i=v}^V \lvert  {X}(j\lambda_v)-h_v \lvert^2},  \quad E_{RMS}^ X=\frac{E_2^ X}{\sqrt{V}}
\end{equation}
and we use them as accuracy indices in order to compare the performance of each modeling approach. Also, we plot the approximation obtained with our method against the sampled frequency response data, and the magnitude of the error along frequency for all the models we build. All the considered algorithms are implemented in MATLAB and executed on a workstation equipped with 32 GB of memory and a 3.3 GHz Intel i9-X7900 CPU. To solve problem~\eqref{eq:OptStep8} we use the YALMIP toolbox~\cite{Lofberg2004} and the solver MOSEK~\cite{mosek}. The codes used to perform the experiments can be found at \url{https://github.com/tomBradde/stabAAA}.

\subsection{A high-speed Printed Circuit Board link}
We consider a high-speed electrical interconnection between two Printed Circuit Boards (PCB), first presented in~\cite{preibisch2017exploring}. Our objective is to derive an ROM for the output admittance at one port of the interconnect. A high-fidelity characterization for this admittance function is computed via an electromagnetic field solver, as explained in~\cite{preibisch2017exploring}. The available dataset is composed of pairs $(j\lambda_v,h_v)=(j2\pi f_v,h_v), v=1,\hdots 497,  f_v \in [0.06,10]$~GHz. 

After data normalization, we apply Algorithm~\ref{al:aaa} to obtain a ROM $\hat H_{ns}(s)$ fulfilling the requirement $\epsilon=E^{\hat  H_{ns}}_\infty<0.001$. The algorithm hits the required approximation tolerance after $k=22$ iterations, returning a model with $E^{\hat  H_{ns}}_\infty= 1.84\times 10^{-4}$ and \mbox{$E^{\hat H_{ns}}_2=7.84\times 10^{-4}$}. The model has an unstable real pole, as shown in the top panel of Fig.~\ref{fig:FittingSchuster}. Therefore, we apply \texttt{stabAAA} using the same error threshold to generate a stable model $\hat H_{s}(s)$. The algorithm generates a model fulfilling the required error tolerance after $k=22$ iterations so that $\hat H_{s}(s)$ has the same complexity of $\hat H_{ns}(s)$. The optimization problem~\eqref{eq:OptStep8} is solved in $0.98$ s on our hardware. The approach effectively removes the unstable pole, returning a model with $E^{\hat H_{s}}_\infty= 1.84\times 10^{-4}$ and $E^{\hat H_{s}}_2=7.83\times 10^{-4}$. Figure~\ref{fig:FittingSchuster} (top panel) shows that all of the stable dominant poles identified by the standard \texttt{AAA} algorithm are recovered by the proposed constrained counterpart. In the middle panel of Fig.~\ref{fig:FittingSchuster} we compare the magnitude response of the reference data with that of $\hat H_{s}(s)$. The two curves are practically indistinguishable.
\begin{table}
\caption{Error metrics for all the considered test cases and generated models.}
\begin{center}
\begin{tabular}{|c|c|c|c|c|c|c|}
\hline
\multicolumn{2}{|c|}{} & \texttt{AAA} & \texttt{stabAAA} & \texttt{VF} & \texttt{AAA}$^2$ & \texttt{smiAAA}\\
\hline
\multirow{3}{*}{PCB}
& $E_2$ & $7.84\times 10^{-4}$ & $7.83\times 10^{-4}$ & $5.52\times 10^{-4}$ & $6.73\times 10^{-2}$ & $8.41\times 10^{-4}$\\ \cline{2-7}
& $E_{RMS}$ & $3.52\times 10^{-5}$ & $3.51\times 10^{-5}$ & $2.48\times 10^{-5}$ & $3.02\times 10^{-3}$ & $3.77\times 10^{-5}$\\ \cline{2-7}
& $E_\infty$ & $1.84\times 10^{-4}$ & $1.84\times 10^{-4}$ & $1.80\times 10^{-4}$ & $3.61\times 10^{-2}$ & $2.47\times 10^{-4}$ \\ \hline
\multirow{2}{*}{ISS}
& $E_2$ & $1.94\times 10^{-4}$ & $ 1.96\times 10^{-4}$ & $1.56\times 10^{-4}$ & $9.73\times 10^{-4}$ & $1.89\times 10^{-3}$\\ \cline{2-7}
& $E_{RMS}$ & $9.69\times 10^{-6}$ & $ 9.82\times 10^{-6}$ & $7.81\times 10^{-6}$ & $4.87\times 10^{-5}$ & $9.15\times 10^{-5}$\\ \cline{2-7}
& $E_\infty$ & $5.62\times 10^{-5}$ & $5.38\times 10^{-5}$ & $1.11\times 10^{-4}$ & $5.06\times 10^{-4}$ & $1.19\times 10^{-3}$\\ \hline   
\multirow{2}{*}{Absorber}
& $E_2$ & $1.74\times 10^{-8}$ & $ 2.55\times 10^{-5}$ & $1.65\times 10^{-5}$ & $2.37\times 10^{-4}$ & $1.14\times 10^{-4}$\\ \cline{2-7}
& $E_{RMS}$ & $5.8\times 10^{-10}$ & $ 8.48\times 10^{-7}$ & $5.55\times 10^{-7}$ & $7.9\times 10^{-6}$ & $3.8\times 10^{-6}$\\ \cline{2-7}
& $E_\infty$ & $1.77\times 10^{-9}$ & $1.92\times 10^{-6}$ & $2.68\times 10^{-6}$ & $4.42 \times 10^{-5}$ & $8.79\times 10^{-6}$ \\ \hline   
\end{tabular}
\end{center}
\label{tab:overall}
\end{table}
We then proceed to model this dataset with the competing methods. The magnitude of the error in the frequency domain is shown for all of the models in the bottom panel of Fig.~\ref{fig:FittingSchuster}, while a summary of the considered error metrics is given in Table~\ref{tab:overall}. The results show that for this test case, the proposed approach performs better than the other two derived from the \texttt{AAA} algorithm and is comparable with the \texttt{VF} iteration. We highlight that the \texttt{VF} model, by construction, does not interpolate the target data at any point, as can be seen in the bottom panel of Fig.~\ref{fig:FittingSchuster}; also, the number of poles required by the \texttt{VF} to achieve this performance had to be specified and was not automatically detected by the algorithm.

\begin{figure}
    \centering
     \includegraphics[width=0.8\columnwidth]{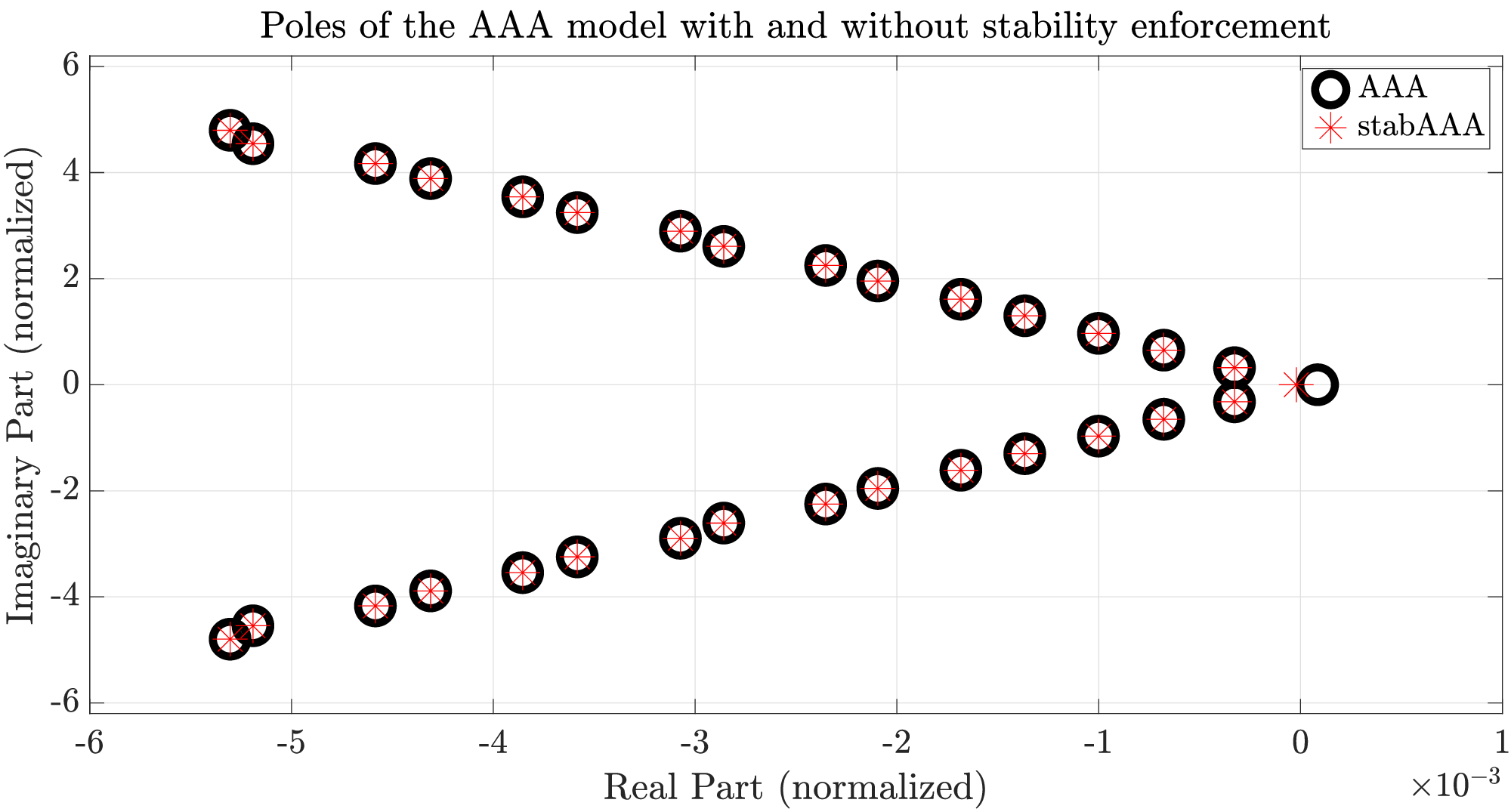}
    \includegraphics[width=0.8\columnwidth]{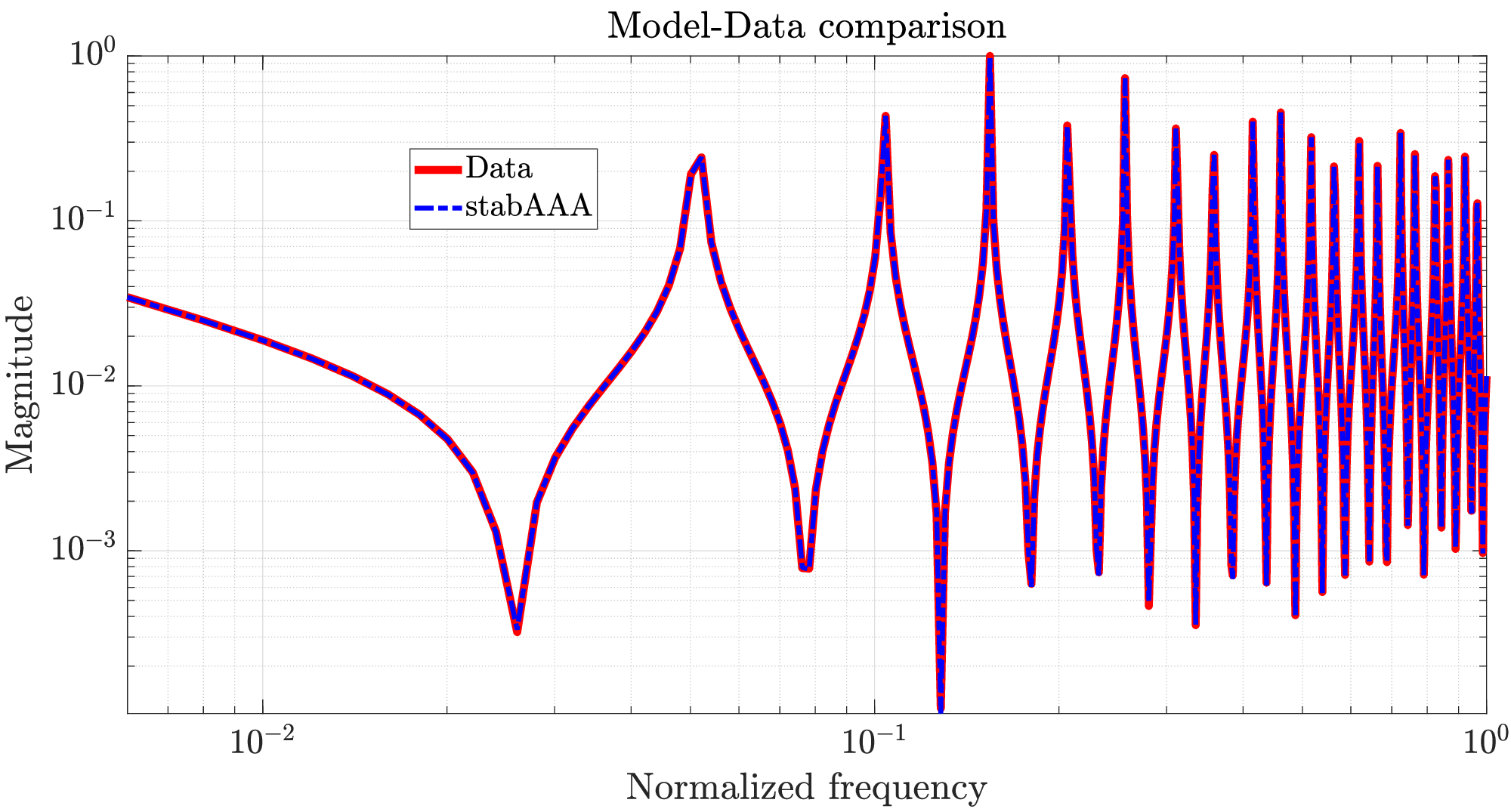}
    \includegraphics[width=0.8\columnwidth]{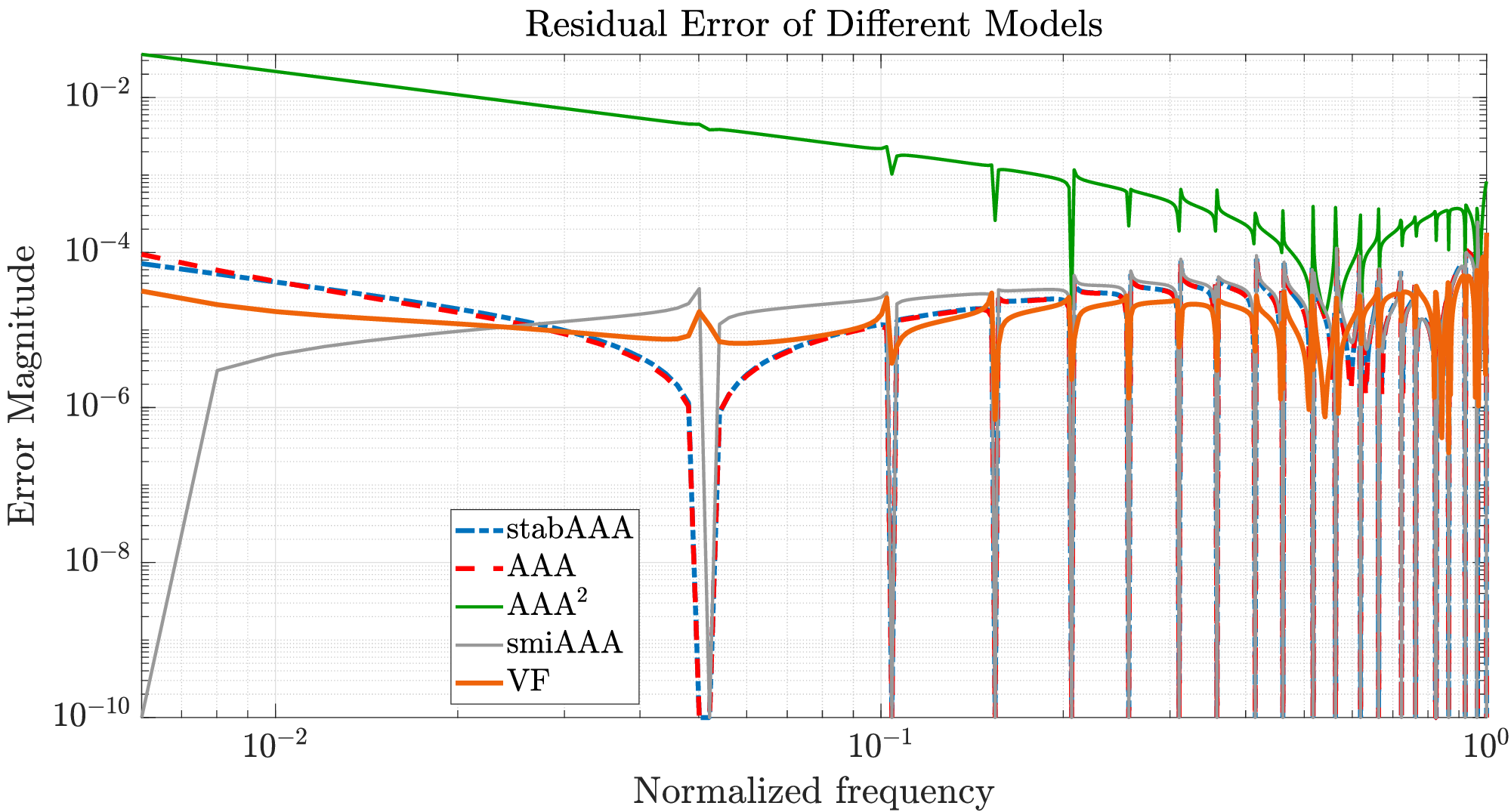}
    \caption{High Speed PCB link test case. Top panel: The plot reports a zoom on the dominant poles identified by the standard \texttt{AAA} algorithm (hollow black circles) and by \texttt{stabAAA} (red asterisks). Middle panel: Magnitude plot of the reference data against the response of the model obtained with \texttt{stabAAA}. Bottom Panel: Comparison between the residual error magnitude shown by ROMs obtained applying different approaches.}
    \label{fig:FittingSchuster}
\end{figure}

\subsection{International Space Station (ISS) 1R model}

In this section, we analyze a structural model for component 1R (Russian service module) of the ISS. This model was used for MOR purposes in, among others, the work in \cite{gugercin2001approximation}, and has become a standard MOR benchmark in the last two decades. The data for this test case are obtained from transfer function evaluations of the full order model along the imaginary axis $(j\lambda_v,h_v), v=1,\hdots 400,  \lambda_v \in [0.1,100]$~rad/s. 

Algorithm~\ref{al:aaa} was applied after data normalization with an admissible error threshold \mbox{$\epsilon=10^{-4}$}, that is met by the model $\hat  H_{ns}(s)$ after $k=31$ iterations with $E_\infty^{\hat H_{ns}}=5.62\times 10^{-5}$ and $E_2^{\hat H_{ns}}=1.94\times 10^{-4}$. The resulting model has one pair of unstable complex conjugate poles and one unstable real pole. The (normalized) dominant poles are depicted in Figure~\ref{fig:FittingISS} (top panel). Applying \texttt{stabAAA}, we obtain a model  $\hat  H_{s}(s)$ with error metrics $E^{\hat H_{s}}_\infty=  5.38\times 10^{-5}$ and $E^{\hat H_{s}}_2=1.96\times 10^{-4}$, meeting the desired accuracy requirements after $k=31$ iterations. For this test case, optimization problem~\eqref{eq:OptStep8} was solved in $6.5$ s on our hardware. The (normalized) dominant poles of this model are shown in the top panel of Fig.~\ref{fig:FittingISS}, while in the middle panel, we plot the response of  $\hat  H_{s}(s)$ against the reference frequency response. Also, in this case, Algorithm~\ref{al:aaa2} returns an accurate approximation of the underlying data. 

The bottom panel of Fig.~\ref{fig:FittingISS} and  Table~\ref{tab:overall} provide a visual and numerical comparison between the proposed approach and the competing methods. The performance of the proposed approach is equivalent to those of Algorithm~\ref{al:aaa} for all practical extents and better than those of other stable approaches derived from the \texttt{AAA} algorithm. When compared with \texttt{VF}, the method is slightly worse in the $E_2$ sense but better in the $E_\infty$ sense, as can be expected due to the different error minimization schemes driving the two algorithms.

\begin{figure}
    \centering   
    \includegraphics[width=0.8\columnwidth]{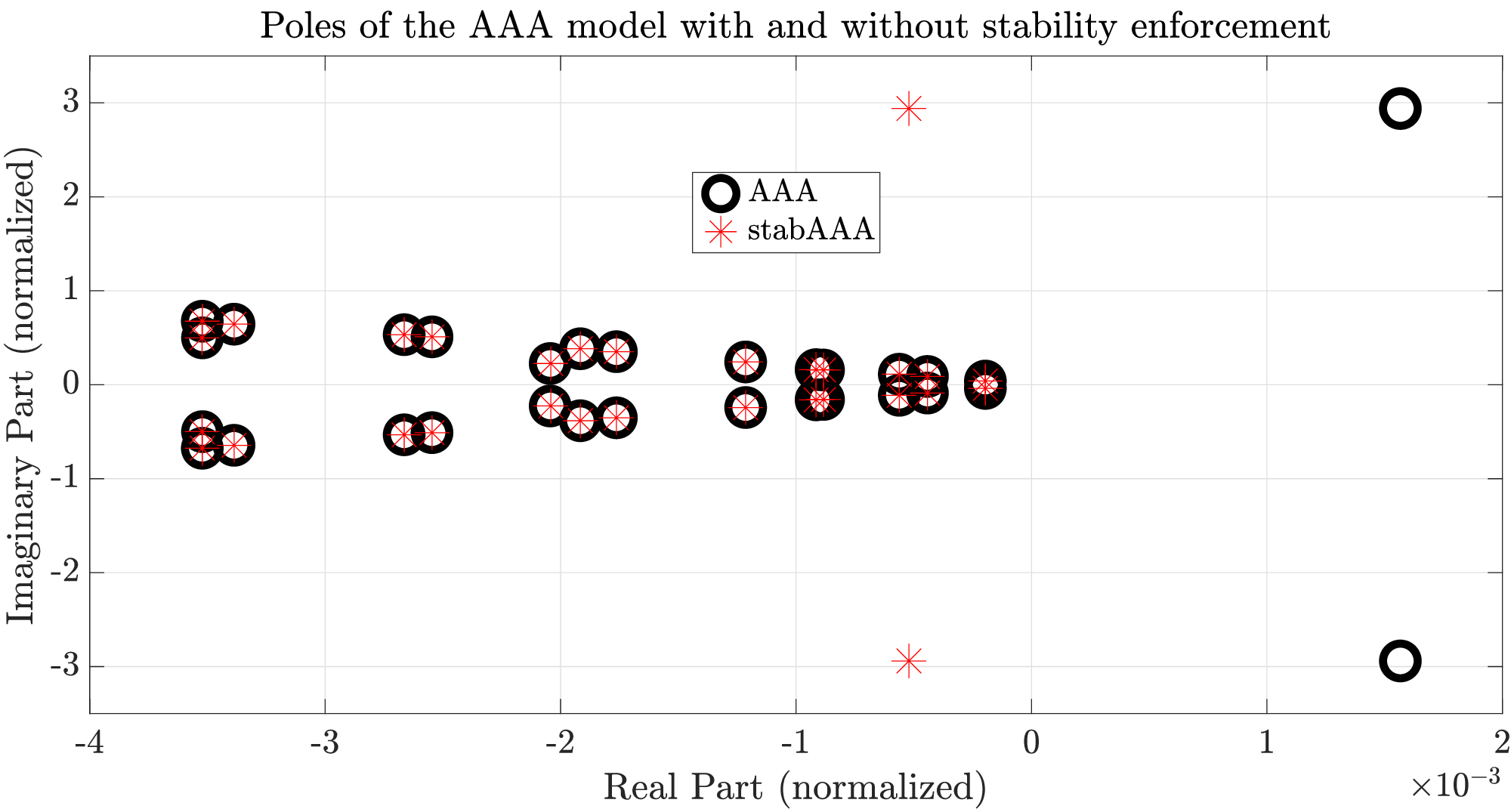}
    \includegraphics[width=0.8\columnwidth]{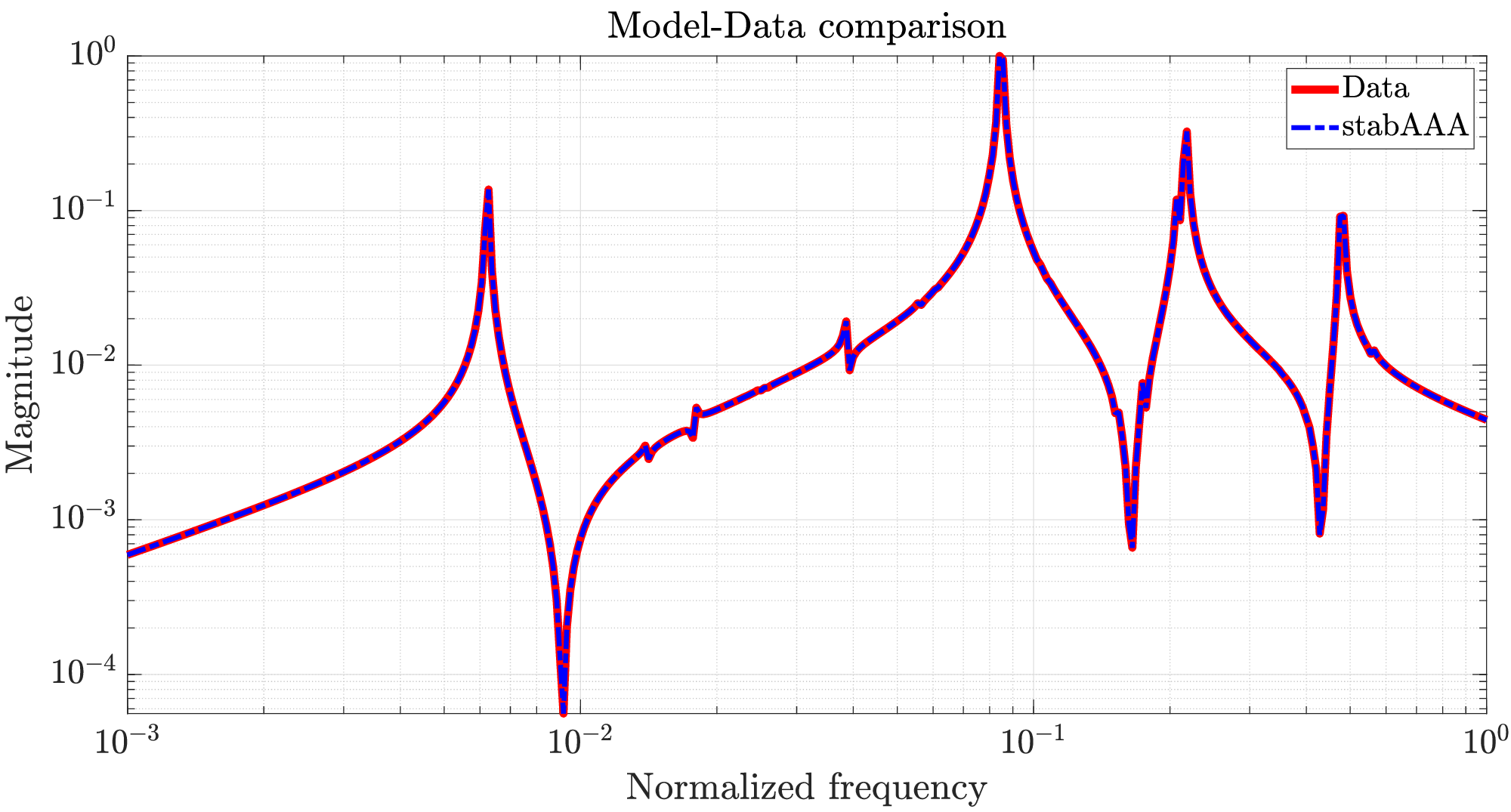}
    \includegraphics[width=0.8\columnwidth]{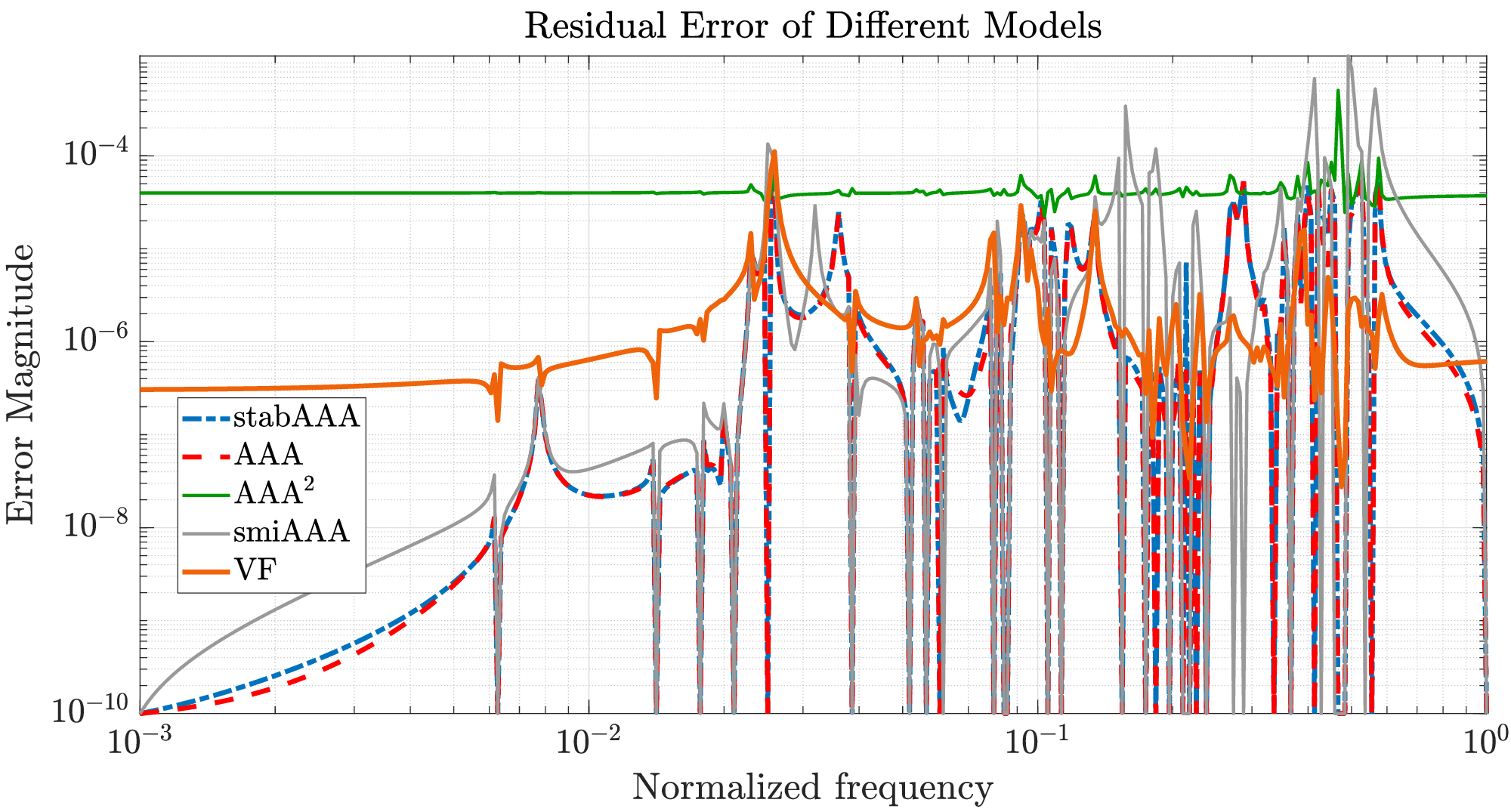}
    \caption{International Space Station test case. Top panel: zoom on the dominant poles identified by the standard \texttt{AAA} algorithm (hollow black circles) and by the stable \texttt{AAA} (red asterisks). Middle panel: magnitude plot of the reference data against the response of the \texttt{stabAAA} model. Bottom panel: comparison between the residual error magnitude obtained by applying different approaches.}
    \label{fig:FittingISS}
\end{figure}

\subsection{Acoustic Absorber}
The system considered in this section is an acoustic absorber, composed of a cavity with one side covered by a layer of poroelastic material. The structure details are available in~\cite{rumpler2014finite}, and the related data are taken from \cite{aumann2023structured}. The data samples for this example are $(j\lambda_v,h_v)=(j2\pi f_v,h_v), v=1,\hdots 901,  f_v \in [0.1,1]$~kHz. Normalizing the data, Algorithm~\ref{al:aaa} is used with $\epsilon=10^{-8}$. The algorithm stops after $k=16$ iterations, returning the model $\hat  H_{ns}(s)$ with $E_\infty^{\hat H_{ns}}=1.77\times 10^{-9}$ and $E_2^{\hat H_{ns}}=1.74\times 10^{-8}$, with $3$ unstable poles (one real and one complex conjugate pair), as shown in the top panel of Fig.~\ref{fig:FittingAbsorber}.

For this test case, we first run \texttt{stabAAA} setting $M_{max}=0$, in order to force the resulting model $\hat  H_{s}(s)$ to retain the same complexity of $\hat  H_{ns}(s)$. The resulting model shows the error metrics $E_\infty^{\hat  H_{s}}=1.92\times 10^{-6}$ and $E_2^{\hat  H_{s}}=2.55\times 10^{-5}$, that do not fulfill the required error tolerance. The optimization problem~\eqref{eq:OptStep8} is solved in $0.8$\,s. The magnitude plot of the data and of $\hat  H_{s}(s)$ are shown in the middle panel of Fig.~\ref{fig:FittingAbsorber}. We observe that for this test case, the proposed approach introduces significant accuracy degradation when compared to the standard (yet unstable) \texttt{AAA} algorithm when a model of the same complexity is required. In order to infer if this accuracy degradation is caused by the proposed convex relaxation of the exact non-convex optimization program~\eqref{eq:OptStep4}, we compare this model with the ones obtained using the competing approaches for even model complexity. 

\begin{figure}[h!]
    \centering
    \includegraphics[width=0.80\columnwidth]{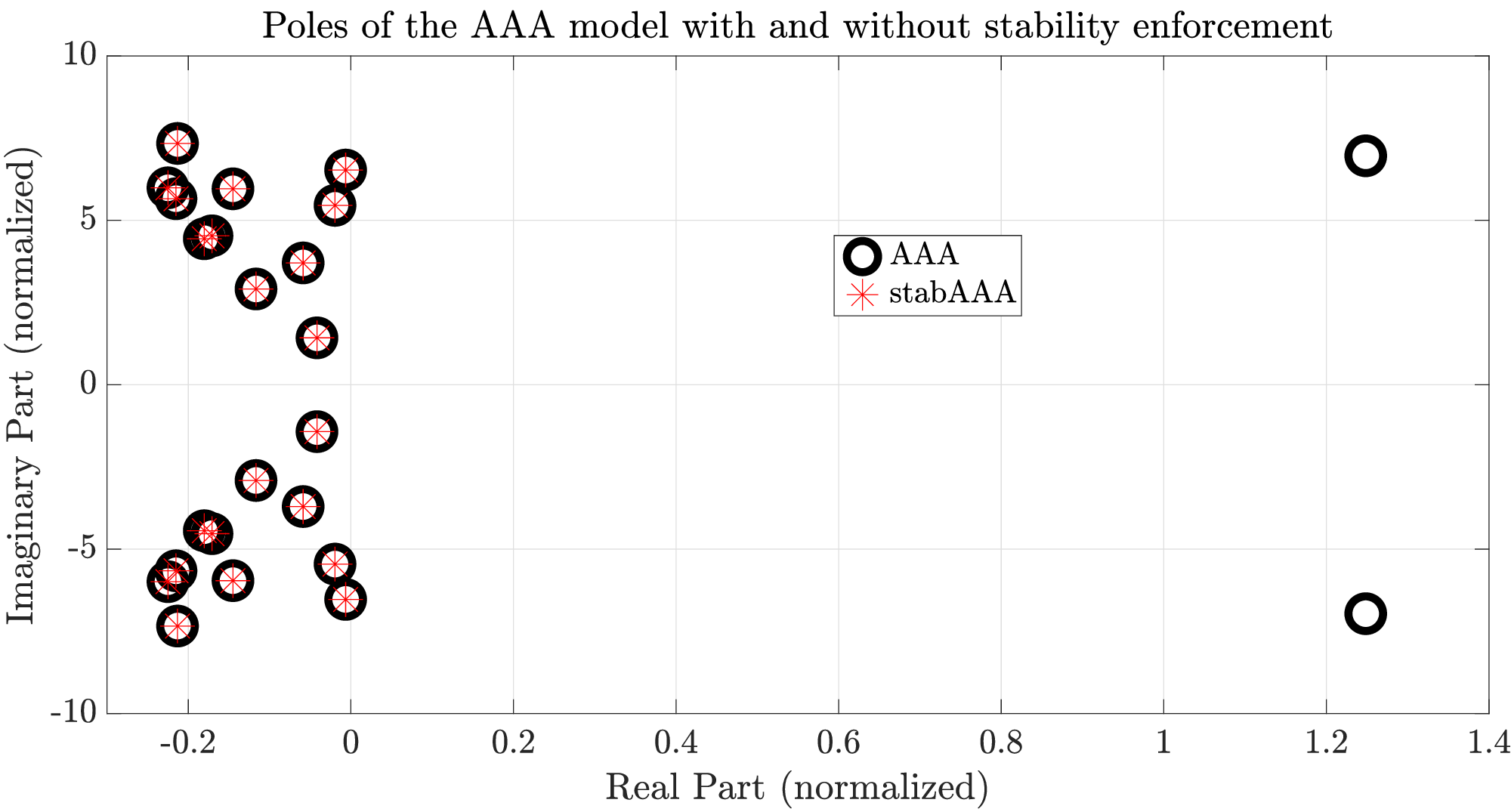}
    \includegraphics[width=0.8\columnwidth]{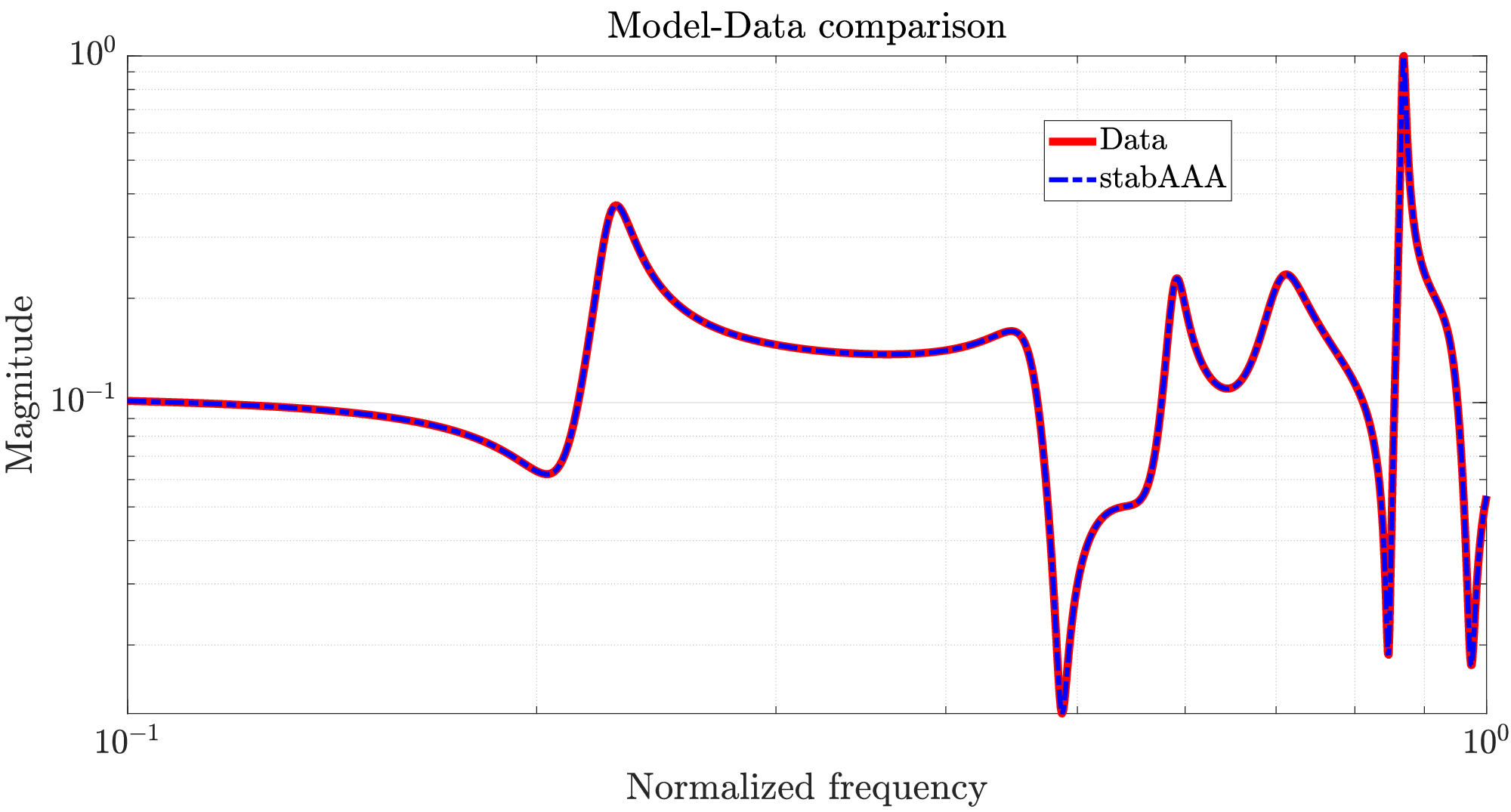}
    \includegraphics[width=0.8\columnwidth]{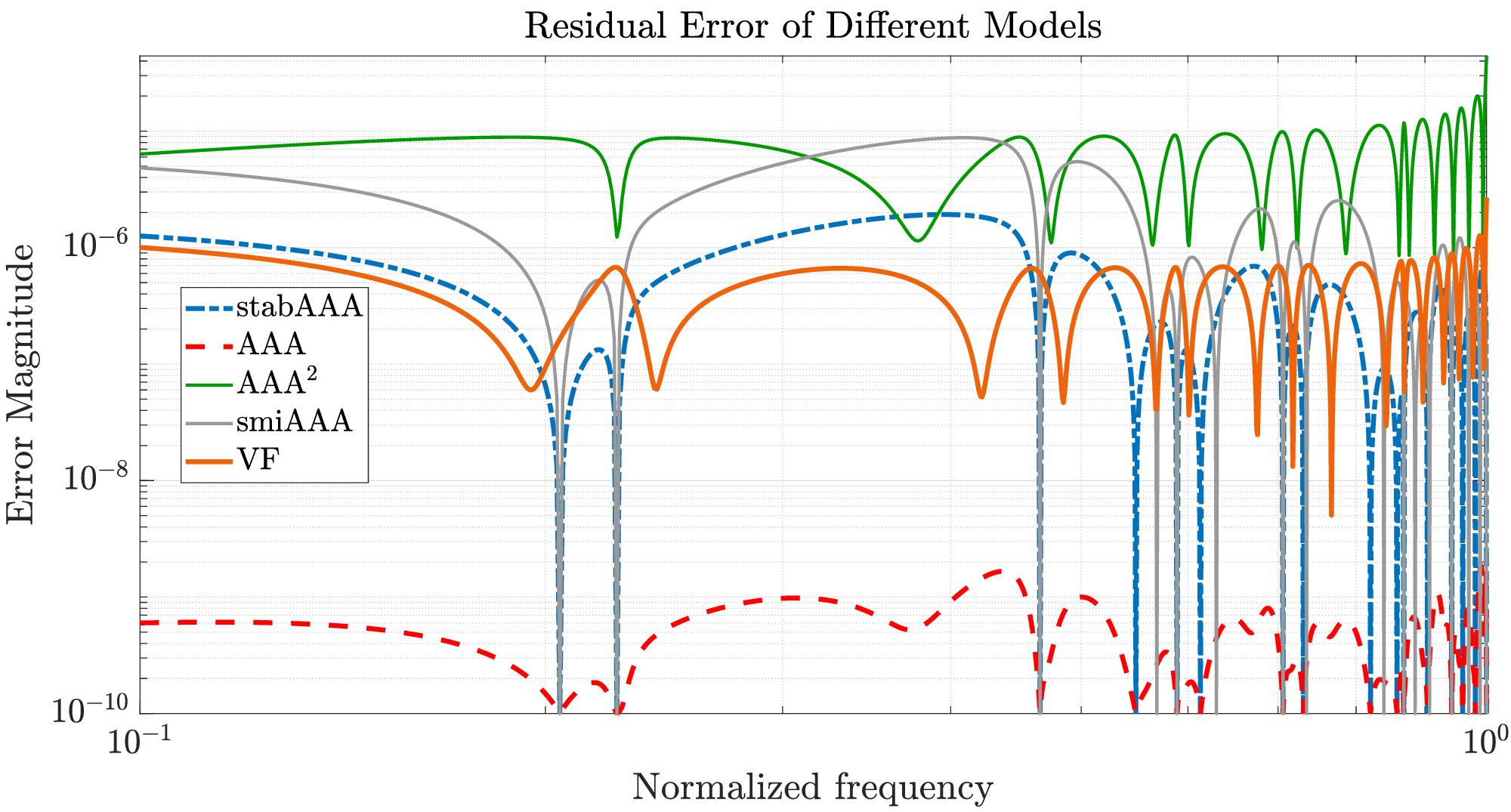}
    \caption{Acoustic absorber test case. Top panel: zoom on the dominant poles identified by the standard \texttt{AAA} algorithm (hollow black circles) and by the stable \texttt{AAA} (red asterisks). Middle panel: magnitude plot of the reference data against the response of the \texttt{stabAAA} model. Bottom panel: comparison between the residual error of the ROMs.}
    \label{fig:FittingAbsorber}
\end{figure}

The plot of the error magnitude for all of the generated models is given in the bottom panel of Fig.~\ref{fig:FittingAbsorber}, while a summary of the considered error metrics is given in Table~\ref{tab:overall}. The results confirm the conclusion drawn in the previous two examples, with the proposed approach performing practically the same as the \texttt{VF} while performing better than the other two stable approaches based on \texttt{AAA}, in particular in the sense of $E_2$. We conclude that the error degradation introduced by the proposed approach can be mostly related to the stability requirement itself rather than to our specific convex relaxation. Finally, in order to meet the prescribed accuracy requirements, we execute again Algorithm~\ref{al:aaa2}, with $M_{max}=5$ and $\theta=0.1$. With these settings, the algorithm stops after $k=17$ iterations, generating a model with error metrics $E_\infty= 3.62\times 10^{-11}$ and  $E_2= 4.1\times 10^{-10}$. In this case, the final result is the same as the one that would be obtained by running Algorithm~\ref{al:aaa} with $\epsilon=10^{-9}$, as with these settings the resulting model is stable on its own and the stability enforcement step is not needed.

\section{Conclusion}
\label{sec:conc}
In this paper, we proposed a novel formulation of the \texttt{AAA} algorithm which generates certified asymptotically stable ROMs of single-input single-output systems. The main enabling factor to achieve this goal is a set of novel algebraic conditions on the \texttt{AAA} barycentric weights that, whenever verified, guarantee the stability of the model by construction. Since these conditions are non-convex on the model coefficients, we propose to enforce them during model generation by solving a relaxed semi-definite programming problem that can be efficiently handled by any off-the-shelf convex optimization solvers. Experimental evidence shows that the performance of the proposed \texttt{stabAAA} algorithm is superior to those achievable by other state-of-the-art approaches based on \texttt{AAA}, and on a par with that of \texttt{VF}. When compared with this latter approach, \texttt{stabAAA} retains all the desirable features of the original \texttt{AAA} algorithm, including automated order selection, interpolation of the target transfer function at prescribed support points, and guaranteed convergence.

\section*{Acknowledgments}%
The first author would like to thank Professor Diego Regruto for the fruitful discussions about homogeneous least-squares problems.

\appendix

\section{Appendix}
\subsection{Explicit definition of $\IL^{(\ell)}_{\textrm{real}}$}\label{app:MatrixStructure}
By making use of the notation  $\Gamma^{(\ell)} = \{j\zeta_1,\ldots,j\zeta_{V-\ell}\}$, for all $1\leq i \leq \ell$ and $1 \leq l \leq V-\ell$, the elements of $\IL_{\textrm{real}}$ are defined as follows:
 $$
\left(\IL^{(\ell)}_{\textrm{real}}\right)_{l,2i-1}= \frac{-\texttt{Im} \left( H(j\zeta_l)\right)+\texttt{Im} \left( h_i^{(\ell)}\right)}{-\zeta_l+\lambda_i}+\frac{\texttt{Im} \left( H(j\zeta_l)\right)+\texttt{Im} \left( h_i^{(\ell)}\right)}{\zeta_l+\lambda_i}, \   
$$
$$
\left(\IL^{(\ell)}_{\textrm{real}}\right)_{l,2i}= \frac{-\texttt{Re} \left( H(j\zeta_l)\right)+\texttt{Re} \left( h_i^{(\ell)}\right)}{-\zeta_l+\lambda_i}+\frac{-\texttt{Re} \left( H(j\zeta_l)\right)+\texttt{Re} \left( h_i^{(\ell)}\right)}{\zeta_l+\lambda_i}, \ 
$$
$$
\left(\IL^{(\ell)}_{\textrm{real}}\right)_{l+V-\ell,2i-1}= \frac{\texttt{Re} \left( H(j\zeta_l)\right)-\texttt{Re} \left( h_i^{(\ell)}\right)}{-\zeta_l+\lambda_i}+\frac{-\texttt{Re} \left( H(j\zeta_l)\right)+\texttt{Re} \left( h_i^{(\ell)}\right)}{\zeta_l+\lambda_i}, \ \text{and} 
$$
$$
\left(\IL^{(\ell)}_{\textrm{real}}\right)_{l+V-\ell,2i}= \frac{-\texttt{Im} \left( H(j\zeta_l)\right)+\texttt{Im} \left( h_i^{(\ell)}\right)}{-\zeta_l+\lambda_i}+\frac{-\texttt{Im} \left( H(j\zeta_l)\right)+\texttt{Im} \left( h_i^{(\ell)}\right)}{\zeta_l+\lambda_i}.
$$
\subsection{Proof of \Cref{lemm:3.1}}
\label{sec:App1}

By using the structure of the matrices in \cref{ROM_SISO_improper}, and by denoting $\tilde{\bLambda}_s  = s \tilde{\bE}-\bLambda$, we get:
\begin{equation}
    \tilde{H}(s) = \tilde{\bC} (s \tilde{\bE} - \tilde{\bA})^{-1}  \tilde{\bB} = \tilde{\bC} (s \tilde{\bE} - \bLambda + \tilde{\bB} \tilde{\bR})^{-1}  \tilde{\bB} = \tilde{\bC} ( \tilde{\bLambda}_s + \tilde{\bB} \tilde{\bR})^{-1}  \tilde{\bB},
\end{equation}
and by making use of the Sherman-Morrison identity
, the function $\tilde{H}(s)$ can be rewritten as
\begin{equation}
	\label{eq:rom_sherm-morri}
	\tilde{H}(s) = \frac{\tilde{\bC} \tilde{\bLambda}_s^{-1} \tilde{\bB}}{1 + \tilde{\bR} \tilde{\bLambda}_s^{-1} \tilde{\bB}}.
\end{equation}
By using that $\tilde{\bLambda}_s  = s \hat{\bE}-\bLambda  = \text{diag}(\bLambda_s,-1)$, where $\bLambda_s = \text{diag}(s-j\lambda_1,s+j\lambda_1,\ldots,s-j\lambda_k,s+j\lambda_k)$ and by using notations $\tilde{\bB} = \begin{bmatrix}
        \hat{\bB}^T & 1
    \end{bmatrix}^T, \ \     \tilde{\bC} = \begin{bmatrix}
        \hat{\bC} & 0
    \end{bmatrix}, \ \ \tilde{\bR} = \begin{bmatrix}
        \bR & 1
    \end{bmatrix}$,
it follows that:
\begin{align}
    &1 + \tilde{\bR} \tilde{\bLambda}_s^{-1} \tilde{\bB} = 1 + \begin{bmatrix}
        \bR & 1
    \end{bmatrix} \left(\text{diag}(\bLambda_s,-1)\right)^{-1} \begin{bmatrix}
        \hat{\bB} \\ 1
    \end{bmatrix}  = 1 + \bR \bLambda_s^{-1} \hat{\bB} - 1 = \bR \bLambda_s^{-1} \hat{\bB}, \ \ \text{and} \\ &\tilde{\bC} \tilde{\bLambda}_s^{-1} \tilde{\bB} = \begin{bmatrix}
        \hat{\bC} & 0
    \end{bmatrix} \left(\text{diag}(\bLambda_s,-1)\right)^{-1} \begin{bmatrix}
        \hat{\bB} \\ 1
    \end{bmatrix} = \hat{\bC} \bLambda_s^{-1} \hat{\bB}.
\end{align}
By combining this result with that in (\ref{eq:rom_sherm-morri}), we obtain:
    $\tilde{H}(s) = \frac{\hat{\bC} \bLambda_s^{-1} \hat{\bB} }{\bR \bLambda_s^{-1} \hat{\bB}}$. We can write:
    
\begin{align*}
    &\hat{\bC} \bLambda_s^{-1} \hat{\bB} =  \begin{bmatrix} h_1& h_1^* & \cdots &  h_k & h_k^* \end{bmatrix} \begin{bmatrix}
        (s-j\lambda_1)^{-1} & 0 & 0 & \cdots & 0 \\
        0 & (s+j\lambda_1)^{-1} & 0 & \cdots & 0 \\
        \vdots & \vdots & \ddots & \vdots & \vdots \\
    \end{bmatrix} \begin{bmatrix} w_1 \\ w_1^* \\ \vdots \\ w_k \\ w_k^* \end{bmatrix} \\
    &= \sum_{i=1}^k \left( \frac{h_i w_i}{s -j \lambda_i} + \frac{(h_i w_i)^*}{s +j\lambda_i} \right), \\
      &\hat{\bR} \bLambda_s^{-1} \hat{\bB} =  \begin{bmatrix} 1& 1 & \cdots &  1 & 1 \end{bmatrix} \begin{bmatrix}
        (s-j\lambda_1)^{-1} & 0 & 0 & \cdots & 0 \\
        0 & (s+j\lambda_1)^{-1} & 0 & \cdots & 0 \\
        \vdots & \vdots & \ddots & \vdots & \vdots \\
    \end{bmatrix} \begin{bmatrix} w_1 \\ w_1^* \\ \vdots \\ w_k \\ w_k^* \end{bmatrix} \\
    &= \sum_{i=1}^k \left( \frac{ w_i}{s -j \lambda_i} + \frac{( w_i)^*}{s +j\lambda_i} \right).
\end{align*}
It follows that the transfer function $\tilde{H}(s)$ has the required barycentric form.

\subsection{Two useful state-space realizations}
\label{sec:App2}

Here, we provide some details on two commonly used state-space realizations, which are instrumental to the formulation of the proposed \texttt{stabAAA} framework.

\subsubsection{A complex-valued realization with unit input map}
\label{rem:remApp2}

Given the realization provided in \cref{ROM_SISO_improper}, we would like to re-write it so that all components in vector $\tilde{\bB}$ are ones, and the transfer function in \cref{eq:AAAReal} remains unchanged. Hence, write the new vectors as
    \begin{align}
    \begin{split}
    \tilde{\bR}_{\textrm{mod}} &= \begin{bmatrix}
     w_1 & w_1^* & \cdots & w_k & w_k^* & 1 
  \end{bmatrix} \in \IC^{1 \times (2k+1)}, \\ \tilde{\bC}_{\textrm{mod}} &=  \begin{bmatrix} h_1 w_1& h_1^* w_1^* & \cdots &  h_k w_k & h_k^* w_k^*& 0
		\end{bmatrix} \in \IC^{1 \times (2k+1)}, \\
    \tilde{\bB}_{\textrm{mod}} &= \begin{bmatrix}
     1 & 1 & \cdots & 1 & 1 & 1 
  \end{bmatrix}^T \in \IC^{(2k+1) \times 1}.
      \end{split}
    \end{align}
which replace corresponding quantities $\tilde{\bR}$, $\tilde{\bC}$, $\tilde{\bB}$ in~\cref{ROM_SISO_improper} by leaving the transfer function unchanged.

\subsubsection{Conversion to real-valued realizations}
\label{rem:remApp3}

By following the realness enforcement procedure in LF, i.e., as in \cite{ALI17}, we can also transform the matrices of the state-space representation of the \texttt{AAA} rational approximant as given in \cref{ROM_SISO_improper}, in order to be real-valued. To achieve this, let us define the following matrix:
\begin{align}
\bJ=\mbox{blkdiag}[
\overbrace{\bJ_2,\cdots,\bJ_2}^{k\,\mbox{terms}},1]\in\IC^{(2k+1)\times(2k+1)}, \quad \bJ_2=\frac{1}{\sqrt{2}}\left[\begin{array}{ cc }
  1 & \mathrm{j}  \\
  1 & -\mathrm{j}  \\
\end{array}\right]
\end{align}
where $\mbox{blkdiag}[\cdot]$ stacks its matrix arguments in block diagonal form.  

The transformed matrices can be explicitly written in all-real format, as follows: 
    \begin{align*}
    	\begin{split}
        \tilde{\bB}_{\text{real}} &= \bJ^*  \tilde{\bB} = \sqrt{2} \begin{bmatrix}
        \texttt{Re}(w_1) & \texttt{Im}(w_1) & \cdots & \texttt{Re}(w_k) & \texttt{Im}(w_k) & 1
    \end{bmatrix}^T \in \IR^{(2k+1) \times 1}, \\
     \tilde{\bC}_{\text{real}} &=  \tilde{\bC} \bJ = \sqrt{2} \begin{bmatrix}
        \texttt{Re}(h_1) & -\texttt{Im}(h_1) & \cdots & \texttt{Re}(h_k) & -\texttt{Im}(h_k) & 0
    \end{bmatrix}^T \in \IR^{1 \times (2k+1)}, \\
    \tilde{\bE}_{\text{real}} &= \text{diag}(1,1,\ldots,1,0) \in \IR^{(2k+1) \times (2k+1)},\
    \tilde{\bR}_{\text{real}} = \begin{bmatrix}
      \sqrt{2} & 0 & \cdots &  \sqrt{2} & 0 & 1
  \end{bmatrix} \in \IR^{1 \times (2k+1)}, \\
    \bLambda_{\text{real}} &= \bJ^*   \bLambda \bJ= \mbox{blkdiag}[
\overbrace{\bK_2 \lambda_1,\cdots,\bK_2 \lambda_k}^{k\,\mbox{terms}},1] \in \IR^{(2k+1) \times (2k+1)}, \ \ \bK_2 = \begin{bmatrix}
    0 & -1 \\ 1 & 0
\end{bmatrix},
    \end{split}
    \end{align*}
    and, finally, we also obtain that $ \tilde{\bA}_{\text{real}} = \bJ^* \bLambda \bJ - \bJ^* \tilde{\bB} \tilde{\bR} \bJ = \bLambda_{\text{real}} - \tilde{\bB}_{\text{real}} \tilde{\bR}_{\text{real}}   \in \IR^{(2k+1) \times (2k+1)}$.

\footnotesize

\printbibliography

@STRING{SCL = {Systems Control Lett.}}

@STRING{IEEETransCADCS = {IEEE Trans. Comput.-Aided Design Integr. Circuits Syst.}}

@STRING{Automatica = {Automatica J. IFAC} }

@string{CompMethAppMechEng = {Comp. Meth. Appl. Mech. Eng.}}

@string{siam  = "SIAM J. Control \& Optimization"}

@string{SIAMPub = {{SIAM} Publications}}

@string{SIAMSciComp = {{SIAM} J. Sci. Comput.}}

@string{scl   = "Syst. Control Lett."}

@article{berljafa2017rkfit,
  title={The RKFIT algorithm for nonlinear rational approximation},
  author={Berljafa, M. and G\"uttel, S.},
  journal={SIAM Journal on Scientific Computing},
  volume={39},
  number={5},
  pages={A2049--A2071},
  year={2017},
  doi={10.1137/15M1025426},
  publisher={SIAM}
}

@book{ilchmann1993non,
  title={Non-identifier-based high-gain adaptive control},
  author={Ilchmann, A.},
  volume={189},
  year={1993},
  doi={10.1007/BFb0032266},
  publisher={Springer}
}

@STRING{CompMethAppMechEng = {Comp. Meth. Appl. Mech. Eng.}}

@STRING{SIAMPub = {{SIAM} Publications}}

@STRING{SIAMSciComp = {{SIAM} J. Sci. Comput.}}

@techreport{CS20,
  title        = {The p-{AAA} algorithm for data driven modeling of parametric dynamical systems},
  author       = {A. {Carracedo Rodriguez} and L. Balicki and S. Gugercin},
  year         = 2020,
  number       = {2003.06536},
  url          = {https://arxiv.org/abs/2003.06536},
  note         = {math.NA},
  institution  = {arXiv},
  type         = {e-print}
}

@article{AA86,
  title        = {On the scalar rational interpolation problem},
  author       = {A. C. Antoulas and B. D. O. Anderson},
  year         = 1986,
  journal      = {IMA J. Math. Control. Inf.},
  volume       = 3,
  number       = {2-3},
  pages        = {61--88},
  doi          = {10.1093/imamci/3.2-3.61}
}

@incollection{ALI17,
  title        = {A tutorial introduction to the {L}oewner framework for model reduction},
  author       = {A.~C. Antoulas and S. Lefteriu and A.~C. Ionita},
  year         = 2017,
  booktitle    = {Model Reduction and Approximation},
  publisher    = {SIAM},
  pages        = {335--376},
  doi          = {10.1137/1.9781611974829.ch8},
  chapter      = 8
}

@article{antoulas2005new,
  title        = {A new result on passivity preserving model reduction},
  author       = {Antoulas, A. C.},
  year         = 2005,
  journal      = scl,
  publisher    = {Elsevier},
  volume       = 54,
  number       = 4,
  pages        = {361--374},
  doi          = {10.1016/j.sysconle.2004.07.007}
}

@book{ABG20,
  title        = {Interpolatory Methods for Model Reduction},
  author       = {Antoulas, A.~C. and Beattie, C.~A. and Gugercin, S.},
  year         = 2020,
  publisher    = {Society for Industrial and Applied Mathematics},
  address      = {Philadelphia, PA},
 doi = {10.1137/1.9781611976083},
  series       = {Computational Science \& Engineering}
}

@book{ACA05,
  title        = {Approximation of Large-Scale Dynamical Systems},
  author       = {Antoulas, A.~C.},
  year         = 2005,
  publisher    = SIAMPub,
  address      = {Philadelphia, PA},
  series       = {Adv. Des. Control},
  volume       = 6,
  doi          = {10.1137/1.9780898718713},
  isbn         = 9780898715293
}

@article{baddoo2021aaatrig,
  title        = {The {AAAtrig} algorithm for rational approximation of periodic functions},
  author       = {Baddoo, P.~J.},
  year         = 2021,
  journal      = SIAMSciComp,
  publisher    = {SIAM},
  volume       = 43,
  number       = 5,
  pages        = {A3372--A3392},
  doi          = {10.1137/20M1359316}
}

@incollection{benner2021model,
  title        = {Model Order Reduction based on Moment-Matching},
  author       = {Benner, P. and Feng, L.},
  year         = 2021,
  booktitle    = {Model Order Reduction: Volume 1: System-and Data-Driven Methods and Algorithms},
  publisher    = {De Gruyter},
  doi          = {10.1515/9783110498967-003},
  pages        = {57--96}
}

@inproceedings{morGosA16,
  title        = {Stability preserving post-processing methods applied in the {L}oewner framework},
  author       = {Gosea, I.~V. and Antoulas, A.~C.},
  year         = 2016,
  booktitle    = {{IEEE} 20th Workshop on Signal and Power Integrity ({SPI}), Turin, Italy, May 8--11},
  pages        = {1--4},
  doi          = {10.1109/SaPIW.2016.7496283}
}

@article{gosea2022data,
  title        = {Data-driven modeling of linear dynamical systems with quadratic output in the {AAA} framework},
  author       = {Gosea, I.~V. and Gugercin, S.},
  year         = 2022,
  journal      = {J. Sci. Comput.},
  publisher    = {Springer},
  volume       = 91,
  number       = 1,
  pages        = {1--28},
  doi          = {10.1007/s10915-022-01771-5}
}

@inproceedings{morGosPA21a,
  title        = {On enforcing stability for data-driven reduced-order models},
  author       = {Gosea, I.~V. and Poussot-Vassal, C. and Antoulas, A.~C.},
  year         = {2021},
  booktitle    = {29th Mediterranean Conference on Control and Automation (MED), Virtual},
  pages        = {487--493},
  doi          = {10.1109/MED51440.2021.9480216}
}

@InProceedings{morPanJWetal13,
  author =       {Panzer, H. and Jaensch, S. and Wolf, T. and Lohmann, B.},
  title =        {A greedy Rational {K}rylov method for {$H_2$}-pseudooptimal
                  model order reduction with preservation of stability},
  booktitle =    {Proceedings of the American Control Conference},
  year =         {2013},
  doi = {10.1109/ACC.2013.6580700},
  pages =        {5512--5517}
}

@article{GG20,
  title        = {Algorithms for the rational approximation of matrix-valued functions},
  author       = {I. V. Gosea and S. G\"uttel},
  year         = {2021},
  journal      = SIAMSciComp,
  publisher    = {Society for Industrial {\&} Applied Mathematics ({SIAM})},
  volume       = 43,
  number       = 5,
  pages        = {A3033--A3054},
  doi          = {10.1137/20m1324727},
  file         = {:Attachments/GoseaGüttel 2021 - Algorithms Rational Approximation (1).pdf:PDF}
}

@incollection{morKarGA19a,
  title        = {The {L}oewner framework for system identification and reduction},
  author       = {Karachalios, D.~S. and Gosea, I.~V. and Antoulas, A.~C.},
  year         = {2021},
  booktitle    = {Methods and Algorithms},
  publisher    = {De Gruyter},
  series       = {Handbook on Model Reduction},
  volume       = {1},
  editor       = {Benner, P. and Grivet-Talocia, S. and Quarteroni, A. and Rozza, G. and Schilders, W.~H.~A. and Silveira, L.~M.},
 doi = {10.1515/9783110498967-006}
}

@book{Lj99,
  title        = {System Identification: Theory for the User},
  author       = {L. Ljung},
  year         = 1999,
  note         = {(2nd edition), ISBN: 0-13-656695-2},
  publisher    = {Prentice Hall, Upper Saddle River, New Jersey, 07458}
}

@article{lietaert2022automatic,
  title        = {Automatic rational approximation and linearization of nonlinear eigenvalue problems},
  author       = {Lietaert, P. and Meerbergen, K. and P{\'e}rez, J. and Vandereycken, B.},
  year         = 2022,
  journal      = {IMA J. Numer. Anal.},
  publisher    = {Oxford University Press},
  volume       = 42,
  number       = 2,
  pages        = {1087--1115},
  doi          = {10.1093/imanum/draa098}
}

@article{MA07,
  title        = {A framework for the solution of the generalized realization problem},
  author       = {Mayo, A. J. and Antoulas, A. C.},
  year         = 2007,
  journal      = {Linear Algebra Appl.},
  volume       = 425,
  number       = {2-3},
  pages        = {634--662},
  doi          = {10.1016/j.laa.2007.03.008}
}

@article{morPehW16,
  title        = {Data-driven operator inference for nonintrusive projection-based model reduction},
  author       = {Peherstorfer, B. and Willcox, K.},
  year         = 2016,
  journal      = CompMethAppMechEng,
  volume       = 306,
  pages        = {196--215},
  doi          = {10.1016/j.cma.2016.03.025}
}

@article{schmid_2010,
  title        = {Dynamic mode decomposition of numerical and experimental data},
  author       = {Schmid, P. J.},
  year         = 2010,
  journal      = {J. Fluid Mech.},
  publisher    = {Cambridge University Press},
  volume       = 656,
  pages        = {5–28},
  doi          = {10.1017/S0022112010001217}
}

@article{van1994n4sid,
  title        = {{N4SID}: Subspace algorithms for the identification of combined deterministic-stochastic systems},
  author       = {Van Overschee, P. and De Moor, B.},
  year         = 1994,
  journal      = {Automatica},
  publisher    = {Elsevier},
  volume       = 30,
  number       = 1,
  pages        = {75--93},
  doi          = {10.1016/0005-1098(94)90230-5}
}

@book{van2012subspace,
  title        = {Subspace identification for linear systems: Theory — Implementation — Applications},
  author       = {Van Overschee, P. and De Moor, B.},
  year         = 2012,
  doi          = {10.1007/978-1-4613-0465-4}, 
  publisher    = {Springer Science \& Business Media}
}

@article{wilber2022data,
  title        = {Data-Driven Algorithms for Signal Processing with Trigonometric Rational Functions},
  author       = {Wilber, H. and Damle, A. and Townsend, A.},
  year         = 2022,
  journal      = SIAMSciComp,
  publisher    = {SIAM},
  volume       = 44,
  number       = 3,
  pages        = {C185--C209},
  doi          = {10.1137/21M1420277}
}

@article{NST18,
  title        = {The {AAA} algorithm for rational approximation},
  author       = {Y. Nakatsukasa and O. Sete and L. N. Trefethen},
  year         = 2018,
  journal      = SIAMSciComp,
  volume       = 40,
  number       = 3,
  pages        = {A1494--A1522},
  doi          = {10.1137/16M1106122}
}

@article{cherifi2022greedy,
  title={A greedy data collection scheme for linear dynamical systems},
  author={Cherifi, K. and Goyal, P. and Benner, P.},
  journal={Data-Centric Eng.},
  volume={3},
  year={2022},
  doi = {10.1017/dce.2022.16},
  publisher={Cambridge University Press}
}

@article{gopal2019representation,
  title={Representation of conformal maps by rational functions},
  author={Gopal, A. and Trefethen, L.~N.},
  journal={Numer. Math.},
  volume={142},
  number={2},
  pages={359--382},
  year={2019},
  doi = {10.1007/s00211-019-01023-z},
  publisher={Springer}
}

@article{berrut2004barycentric,
  title={Barycentric {L}agrange interpolation},
  author={Berrut, J.-P. and Trefethen, L.~N.},
  journal={SIAM Rev.},
  volume={46},
  number={3},
  pages={501--517},
  year={2004},
  doi = {10.1137/S0036144502417715},
  publisher={SIAM}
}

@Article{aumann2023practical,
  author        = {Q. Aumann and I. V. Gosea},
  title         = {Practical challenges in data-driven interpolation: dealing with noise, enforcing stability, and computing realizations},
  journal = { International Journal of Adaptive Control and Signal Processing},
  year          = {2023},
  note = {Online Version of Record before inclusion in an issue},
publisher = {John Wiley & Sons Ltd},
  url = {https://onlinelibrary.wiley.com/doi/10.1002/acs.3691?af=R}
}

@article{driscoll2023aaa,
  title={AAA rational approximation on a continuum},
  author={Driscoll, T. and Nakatsukasa, Y. and Trefethen, L. N.},
  journal={arXiv preprint arXiv:2305.03677},
  doi = {10.48550/arXiv.2305.03677},
  year={2023}
}

@article{gosea2020rational,
  title={Rational approximation of the absolute value function from measurements: a numerical study of recent methods},
  author={Gosea, I. V. and Antoulas, A. C.},
  journal={arXiv preprint arXiv:2005.02736},
doi = {10.48550/arXiv.2005.02736},
  year={2020}
}

@article{breiten20212,
  title={Balancing-related model reduction methods},
  author={Breiten, T. and Stykel, T.},
  journal={System-and Data-Driven Methods and Algorithms},
  pages={15--56},
  year={2021},
  publisher={Walter de Gruyter GmbH \& Co KG},
  doi={10.1515/9783110498967-002}
}

@article{SINDY,
author = {S. L. Brunton  and J. L. Proctor  and J. N. Kutz },
title = {Discovering governing equations from data by sparse identification of nonlinear dynamical systems},
journal = {Proceedings of the National Academy of Sciences},
volume = {113},
number = {15},
pages = {3932-3937},
year = {2016},
doi = {10.1073/pnas.1517384113}
}

@book{brunton2022data,
  title={Data-Driven Science and Engineering: Machine Learning, Dynamical Systems, and Control},
  author={Brunton, S. L. and Kutz, J. N.},
  isbn={9781009098489},
  lccn={2022000124},
  url={https://books.google.de/books?id=rxNkEAAAQBAJ},
  year={2022},
  publisher={Cambridge University Press}
}

@ARTICLE{VF,
  author={Gustavsen, B. and Semlyen, A.},
  journal={IEEE Transactions on Power Delivery}, 
  title={Rational approximation of frequency domain responses by vector fitting}, 
  year={1999},
  volume={14},
  number={3},
  pages={1052-1061},
  doi={10.1109/61.772353}
  }

@article{ProBruKut2016,
author = {Proctor, J. L. and Brunton, S. L. and Kutz, J. N.},
title = {Dynamic Mode Decomposition with Control},
journal = {SIAM Journal on Applied Dynamical Systems},
volume = {15},
number = {1},
pages = {142-161},
year = {2016},
doi = {10.1137/15M1013857}
}

@Article{morBenGKetal20,
  author =       {Benner, P. and Goyal, P. and Kramer, B. and Peherstorfer,
                  B. and Willcox, K.},
  title =        {Operator inference for non-intrusive model reduction of
                  systems with non-polynomial nonlinear terms},
  journal =     CompMethAppMechEng,
  year =         {2020},
  volume =      {372},
  pages =        {113433},
  doi =          {10.1016/j.cma.2020.113433}
}

@article{grivet2004passivity,
  title={Passivity enforcement via perturbation of Hamiltonian matrices},
  author={Grivet-Talocia, S.},
  journal={IEEE Transactions on Circuits and Systems I: Regular Papers},
  volume={51},
  number={9},
  pages={1755--1769},
  year={2004},
  doi={10.1109/TCSI.2004.834527},
  publisher={IEEE}
}

@Article{morBenGV20,
  author =       {Benner, P. and Goyal, P. and Van Dooren, P.},
  title =        {Identification of Port-{H}amiltonian systems from frequency
                  response data},
  journal =      SCL,
  year =         2020,
  volume =       143,
  pages =        {104741},
  doi =          {10.1016/j.sysconle.2020.104741}
}

@book{grivet2015passive,
  title={Passive macromodeling: Theory and applications},
  author={Grivet-Talocia, S. and Gustavsen, B.},
  year={2015},
  note={ISBN: 978-1-118-09491-4},
  publisher={John Wiley \& Sons}
}

@article{triverio2007stability,
  title={Stability, causality, and passivity in electrical interconnect models},
  author={Triverio, P. and Grivet-Talocia, S. and Nakhla, M. S. and Canavero, F. G. and Achar, R.},
  journal={IEEE Transactions on Advanced Packaging},
  volume={30},
  number={4},
  pages={795--808},
  year={2007},
  doi={10.1109/TADVP.2007.901567},
  publisher={IEEE}
}

@Article{morBreU22,
  author =       {Breiten, T. and Unger, B.},
  title =        {Passivity preserving model reduction via spectral
                  factorization},
  journal =      Automatica,
  year =         2022,
  volume =       142,
  pages =        {Paper No. 110368, 12},
  issn =         {0005-1098},
  doi =          {10.1016/j.automatica.2022.110368}
}

@Article{morIonRA08,
  author =       {Ionutiu, R. and Rommes, J. and Antoulas, A.~C.},
  title =        {Passivity preserving model reduction using dominant spectral
                  zero interpolation},
  journal =      IEEETransCADCS,
  year =         2008,
  volume =       27,
  number =       12,
  doi = {10.1109/TCAD.2008.2006160},
  pages =        {2250--2263}
}

@Article{morReiS11,
  author =       {Reis, T. and Stykel, T.},
  title =        {{L}yapunov balancing for passivity-preserving model reduction
                  of {RC} circuits},
  journal =      SIAMAppDyns,
  year =         2011,
  volume =       10,
  number =       1,
  pages =        {1--34},
  doi =          {10.1137/090779802}
}

@Article{morReiW11,
  author =       {Reis, T. and Willems, J.~C.},
  title =        {A balancing approach to the realization of systems with
                  internal passivity and reciprocity},
  journal =      SCL,
  year =         2011,
  volume =       60,
  number =       1,
  pages =        {69--74},
  doi =          {10.1016/j.sysconle.2010.10.009}}

@article{goyal2023inference,
  title={Inference of Continuous Linear Systems from Data with Guaranteed Stability},
  author={Goyal, P. and  Pontes Duff, I. and Benner, P.},
  journal={arXiv preprint},
  doi = {10.48550/arXiv.2301.10060},
  year={2023}
}

@article{goyal2023guaranteed,
  title={Guaranteed Stable Quadratic Models and their applications in {SIND}y and Operator Inference},
  author={Goyal, P. and Pontes Duff, I. and Benner, P.},
  journal={arXiv preprint},
  doi = {10.48550/arXiv.2308.13819},
  year={2023}
}

@article{derevianko2023esprit,
  title={From ESPRIT to ESPIRA: estimation of signal parameters by iterative rational approximation},
  author={Derevianko, N. and Plonka, G. and Petz, M.},
  journal={IMA Journal of Numerical Analysis},
  volume={43},
  number={2},
  pages={789--827},
  year={2023},
  publisher={Oxford University Press},
  doi={10.1093/imanum/drab108}
}

@article{goyal2023rank,
  title={Rank-minimizing and structured model inference
},
  author={Goyal, P. and  Peherstorfer, B. and Benner, P.},
  journal={arXiv preprint},
  doi = {10.48550/arXiv.2302.09521},
  year={2023}
}

@article{pradovera2022technique,
  title={A technique for non-intrusive greedy piecewise-rational model reduction of frequency response problems over wide frequency bands},
  author={Pradovera, D. and Nobile, F.},
  journal={Journal of Mathematics in Industry},
  volume={12},
  number={1},
  pages={1--12},
  year={2022},
  publisher={SpringerOpen},
  doi={10.1186/s13362-021-00117-4}
}

@article{valera2021aaa,
  title={{AAA} algorithm for rational transfer function approximation with stable poles},
  author={Valera-Rivera, A. and Engin, A. E.},
  journal={IEEE Letters on Electromagnetic Compatibility Practice and Applications},
  volume={3},
  number={3},
  pages={92--95},
  year={2021},
  doi={10.1109/LEMCPA.2021.3104455},
  publisher={IEEE}
}

@article{carrera2021improving,
  title={Improving accuracy after stability enforcement in the Loewner matrix framework},
  author={Carrera-Retana, L. E. and Marin-Sanchez, M. and Schuster, C. and Rimolo-Donadio, R.},
  journal={IEEE Transactions on Microwave Theory and Techniques},
  volume={70},
  number={2},
  pages={1037--1047},
  year={2021},
  publisher={IEEE}
}

@article{deckers2022time,
  title={Time integration of finite element models with nonlinear frequency dependencies},
  author={Deckers, E. and Jonckheere, S. and Meerbergen, K.},
  journal={arXiv preprint},
  doi={10.48550/arXiv.2206.09617},
  year={2022}
}

@ARTICLE{ProofMP_ASPR,
  author={Barkana, I.},
  journal={IEEE Transactions on Automatic Control}, 
  title={Comments on "Design of strictly positive real systems using constant output feedback"}, 
  year={2004},
  volume={49},
  number={11},
  pages={2091-2093},
  doi={10.1109/TAC.2004.837565}}

@book{brogliato2007dissipative,
  title={Dissipative systems analysis and control},
  author={Brogliato, B. and Maschke, B. and Lozano, R. and Egeland, O.},
  journal={Theory and Applications},
  volume={2},
  series = {Communications and Control Engineering (CCE)},
  year={2007},
  doi={10.1007/978-1-84628-517-2},
  publisher={Springer}
}

@article{taylor1974strictly,
  title={Strictly positive-real functions and the Lefschetz-Kalman Yakubovich (LKY) lemma},
  author={Taylor, J},
  journal={IEEE Transactions on Circuits and Systems},
  volume={21},
  number={2},
  pages={310--311},
  year={1974},
  doi={10.1109/TCS.1974.1083816},
  publisher={IEEE}
}

@inproceedings{kergus2020data,
  title={Data-driven stability analysis and enforcement for Loewner Data-Driven Control},
  author={Kergus, P.},
  booktitle={Workshop Intersections between Learning, Control and Optimization (IPAM, UCLA)},
  note={\url{https://hal.science/hal-03095975/document}},
  year={2020}
}

@article{Cool22,
title = {Black box stability preserving reduction techniques in the Loewner framework for the efficient time domain simulation of dynamical systems with damping treatments},
author = {V. Cool and S. Jonckheere and E. Deckers and W. Desmet},
journal = {Journal of Sound and Vibration},
volume = {529},
pages = {116922},
year = {2022},
doi = {10.1016/j.jsv.2022.116922}
}

@article{morAumBGetal23,
  author =       {Aumann, Q. and Benner, P. and Gosea, I.~V. and
                  Saak, J. and Vettermann, J.},
  title =        {A tangential interpolation framework for the {AAA} algorithm},
  journal =      {Proc. Appl. Math. Mech.},
  volume =       {23},
  number =       {3},
  pages =        {e202300183},
  year =         2023,
  doi =          {10.1002/pamm.202300183},
}

@InProceedings{gugercin2001approximation,
  author    = {S. Gugercin and A. C. Antoulas and N. Bedrossian},
  booktitle = {Proceedings of the 40th {IEEE} Conference on Decision and Control (Cat. No.01CH37228)},
  title     = {Approximation of the International Space Station 1R and 12A models},
  publisher = {{IEEE}},
  year      = 2001,
  doi       = {10.1109/cdc.2001.981109},
}

@article{rumpler2014finite,
	author = {Rumpler, R. and Göransson, P. and Deü, Jean-François},
	year = {2014},
	title = {A finite element approach combining a reduced-order system, {P}adé approximants, and an adaptive frequency windowing for fast multi-frequency solution of poro-acoustic problems},
	pages = {759--784},
	volume = {97},
	number = {10},
	issn = {0029-5981},
	journal = {International Journal for Numerical Methods in Engineering},
	doi = {10.1002/nme.4609},
}

@Article{aumann2023structured,
  author    = {Q. Aumann and S. W. R. Werner},
  journal   = {Journal of Sound and Vibration},
  title     = {Structured model order reduction for vibro-acoustic problems using interpolation and balancing methods},
  year      = {2023},
  month     = {oct},
  pages     = {117363},
  volume    = {543},
  doi       = {10.1016/j.jsv.2022.117363},
}

@ARTICLE{BraddeBernstein,
  author={Bradde, T. and Grivet-Talocia, S. and Zanco, A. and Calafiore, G. C.},
  journal={IEEE Access}, 
  title={Data-Driven Extraction of Uniformly Stable and Passive Parameterized Macromodels}, 
  year={2022},
  volume={10},
  number={},
  pages={15786-15804},
  doi={10.1109/ACCESS.2022.3147034}}

@ARTICLE{ZancoPositive,
  author={Zanco, A. and Grivet-Talocia, S. and Bradde, T. and De Stefano, M.},
  journal={IEEE Transactions on Components, Packaging and Manufacturing Technology}, 
  title={Uniformly Stable Parameterized Macromodeling Through Positive Definite Basis Functions}, 
  year={2020},
  volume={10},
  number={11},
  pages={1782-1794},
  doi={10.1109/TCPMT.2020.3012275}}

@inproceedings{preibisch2017exploring,
  title={Exploring efficient variability-aware analysis method for high-speed digital link design using PCE},
  author={Preibisch, J. B. and Jayaprakash, B. and Reuschel, T. and Scharff, K. and Sen, B. and Schuster, C.},
  booktitle={Proc. UBM DesignCon},
  url={https://www.signalintegrityjournal.com/articles/699-designcon-personal-reflections-on-the-past-twenty-years},
  address={Santa Clara, CA, USA},
  year={2017}
}

@inproceedings{Lofberg2004,
address = {Taipei, Taiwan},
author = {L{\"{o}}fberg, J.},
booktitle = {In Proceedings of the CACSD Conference},
title = {YALMIP : A Toolbox for Modeling and Optimization in MATLAB},
doi = {10.1109/CACSD.2004.1393890},
year = {2004}
}

@manual{mosek,
   author = "MOSEK ApS",
   title = "The MOSEK optimization toolbox for MATLAB manual. Version 10.1.16.",
   year = 2023,
   url = "https://docs.mosek.com/latest/toolbox/index.html"
 }

@article{osti2005602,
title = {Iterative stability enforcement in Adaptive Antoulas-Anderson algorithms for $H_2$ Model Reduction},
author = {Davis, L. and Johns, W. and Monzon, L. and Reynolds, M.},
doi = {10.1137/21M1467043},
url = {https://www.osti.gov/biblio/2005602}, journal = {SIAM Journal on Scientific Computing},
number = 4,
volume = 45,
place = {United States},
year = {2023},
month = {7}
}

@Article{BraddeRTVF,
AUTHOR = {Bradde, T. and Chevalier, S. and De Stefano, M. and Grivet-Talocia, S. and Daniel, L.},
TITLE = {Handling initial conditions in Vector Fitting for real time modeling of Power System dynamics},
JOURNAL = {Energies},
VOLUME = {14},
YEAR = {2021},
NUMBER = {9},
DOI = {10.3390/en14092471},
ARTICLE-NUMBER = {2471}
}

@article{convexOverbounding,
author = {E. C. Warner and J. T. Scruggs},
title = {Iterative convex overbounding algorithms for BMI optimization problems},
journal = {IFAC-PapersOnLine},
volume = {50},
number = {1},
pages = {10449-10455},
year = {2017},
note = {20th IFAC World Congress},
doi = {10.1016/j.ifacol.2017.08.1974}
}

@article{caverly2019lmi,
  title={LMI properties and applications in systems, stability, and control theory},
  author={Caverly, R. J. and Forbes, J. R.},
  journal={arXiv preprint},
  doi={10.48550/arXiv.1903.08599},
  year={2019}
}

@ARTICLE{SK,
  author={Sanathanan, C. and Koerner, J.},
  journal={IEEE Transactions on Automatic Control}, 
  title={Transfer function synthesis as a ratio of two complex polynomials}, 
  year={1963},
  volume={8},
  number={1},
  pages={56-58},
  doi={10.1109/TAC.1963.1105517}}

@ARTICLE{jnl-2006-temc-VFAS,
  author  = {S. Grivet-Talocia and M. Bandinu},
  title   = {Improving the convergence of Vector Fitting for equivalent circuit extraction from noisy frequency responses},
  journal = {{IEEE} Trans. Electromagnetic Compatibility},
  month   = {February},
  year    = {2006},
  volume  = {48},
  number  = {1},
  doi = {10.1109/TEMC.2006.870814},
  pages	  = {104--120}
}

@ARTICLE{jnl-2008-tadvp-dispersionrelations,
  author  = {P. Triverio and S. Grivet-Talocia},
  title   = {Robust causality characterization via generalized dispersion relations},
  journal = {{IEEE} Trans. Advanced Packaging},
  month   = {Aug},
  year    = {2008},
  volume  = {31},
  number  = {3},
  doi     = {10.1109/TADVP.2008.927850},
  pages	  = {579--593}
}

@ARTICLE{6680676,
  author={Triverio, P.},
  journal={IEEE Microwave and Wireless Components Letters}, 
  title={Robust causality check for sampled scattering parameters via a filtered {F}ourier transform}, 
  year={2014},
  volume={24},
  number={2},
  pages={72-74},
  doi={10.1109/LMWC.2013.2290218}}

@ARTICLE{mwcl-2008-Deschrijver-FastVF, 
	author={Deschrijver, D. and Mrozowski, M. and Dhaene, T. and {De Zutter}, D.}, 
	journal={Microwave and Wireless Components Letters, IEEE}, title={Macromodeling of Multiport Systems Using a Fast Implementation of the Vector Fitting Method}, 
	year={2008}, 
	month={june }, 
	volume={18}, 
	number={6}, 
	pages={383--385},  
	doi={10.1109/LMWC.2008.922585}, 
	ISSN={1531-1309},
}

@ARTICLE{jnl-2011-tcpmt-pvf,
  author = {A. Chinea and S. Grivet-Talocia},
  title = {On the parallelization of Vector Fitting algorithms},
  journal = {IEEE Transactions on Components, Packaging and Manufacturing Technology},
  month   = {November},
  year    = {2011},
  volume  = {1},
  number  = {11},
  doi = {10.1109/TCPMT.2011.2167973},
  pages	  = {1761--1773}
}

@ARTICLE{6470726, 
author={Lefteriu, S. and Antoulas, A. C.}, 
journal={Microwave Theory and Techniques, IEEE Transactions on}, 
title={On the convergence of the Vector-Fitting algorithm}, 
year={2013}, 
volume={61}, 
number={4}, 
pages={1435--1443}, 
keywords={convergence;frequency-domain analysis;linear systems;network analysis;Newton step;S-parameters;Y-parameters;Z-parameters;convergence analysis;full-wave electromagnetic simulated frequency-domain responses;multivariate rational equations;noise-free measurements;pole relocation iteration;rational models;vector-fitting algorithm;vector-fitting iteration;Convergence;Frequency measurement;Linear systems;Mathematical model;Noise measurement;Polynomials;Convergence analysis;SanathananÐKoerner (SK) iteration;SteiglitzÐMcBride (StMcB) method;iterative algorithm;variable projection (VARPRO) algorithm;vector fitting (VF)}, 
doi={10.1109/TMTT.2013.2246526}, 
ISSN={0018-9480},}

@ARTICLE{instrumental,
  author={Beygi, A. and Dounavis, A.},
  journal={IEEE Transactions on Microwave Theory and Techniques}, 
  title={An instrumental variable Vector-Fitting approach for noisy frequency responses}, 
  year={2012},
  volume={60},
  number={9},
  pages={2702-2712},
  doi={10.1109/TMTT.2012.2206399}}

@book{mon-2020-MOR-Vol1,
      editor = "P. Benner and S. Grivet-Talocia and A. Quarteroni and G. Rozza and W. Schilders and L. M. Silveira",
      title = "Model Order Reduction",
      volume = "1: System- and Data-Driven Methods and Algorithms",
      year = "2021",
      publisher = "De Gruyter",
      address = "Berlin, Boston",
      isbn = "9783110498967",
	doi = {doi:10.1515/9783110498967}
}

\end{document}